\def\pgfdecoratedcontourdistance{0pt}
    \let\pgfdecoratedcontourdistance=\pgfmathresult}%
        \let\shorten=\pgfmathresult%
\newtheorem{lemma}{Lemma}
\newtheorem{theorem}{Theorem}
\newtheorem{corollary}{Corollary}
\newtheorem{definition}{Definition}
\newtheorem{prop}{Proposition}
\newtheorem{defn}{Definition}
\newtheorem{remark}{Remark}
\newtheorem{eg}{Example}
\title{Tur\'an Density of $2$-edge-colored Bipartite Graphs with Application on  $\{2, 3\}$-Hypergraphs}
\author{
Shuliang Bai \thanks{Shing-Tung Yau Center, Southeastern University, Nanjing, 210018, China,
({\tt sbai@seu.edu.cn}).} \and 
Linyuan Lu
\thanks{University of South Carolina, Columbia, SC 29208,
({\tt lu@math.sc.edu}). This author was supported in part by NSF
grant DMS 1600811.}
}
\begin{document}
\maketitle
\begin{abstract}
We consider the Tur\'an problems of $2$-edge-colored graphs. 
A $2$-edge-colored graph $H=(V, E_r, E_b)$ is a triple consisting of the vertex set $V$, 
the set of red edges $E_r$ and the set of blue edges $E_b$ with $E_r$ and $E_b$ do not have to be disjoint. 
The Tur\'an density $\pi(H)$ of $H$ is defined to be $\lim_{n\to\infty} \max_{G_n}h_n(G_n)$, where $G_n$ is chosen among all possible $2$-edge-colored graphs on $n$ vertices containing no $H$ as a sub-graph and $h_n(G_n)=(|E_r(G)|+|E_b(G)|)/{n\choose 2}$ is the formula to measure the edge density of $G_n$.  We will determine the Tur\'an densities of all $2$-edge-colored bipartite graphs.  We also give an important application of our study on the Tur\'an problems of $\{2, 3\}$-hypergraphs. 
\end{abstract}

\begin{keyword}
Tur\'an density,  $2$-edge-colored graph, $\{2, 3\}$-hypergraph. 
\end{keyword}

\section{Introduction}
%{\color{red} 
Given a graph $H$, the Tur\'an problem asks for the maximum possible number of edges (denoted as $ex(H, n)$) in a graph $G$ on $n$ vertices without a copy of $H$ as a sub-graph. 
%The graph achieving $ex(H, n)$ is called an extremal graph with respect to $H$. 
The Mantel's theorem \cite{Mental} states that any graph on $n$ vertices with no triangle contains at most  $\lfloor n^2/4\rfloor$ edges.   Tur\'an\cite{Turan} proved that the maximal number of edges in a $k$-clique free graph  on $n$ vertices is at most $(k-2)n^2/(2k-2)$. 
The famed Erd\H{o}s-Stone-Simonovits Theorem \cite{ESS1, ESS2} proved that the Tur\'an density of any graph $H$  is $\pi(H)=1-\frac{1}{\mathcal{X}(H)-1}$, where $\mathcal{X}(H)$ is the chromatic number of $H$. For hypergraphs the extremal problems are harder, see Keevash\cite{Kee} for a complete survey of some results and methods on uniform hypergraphs.  Although Tur\'an type problems for graphs and hypergraphs have been actively studied for decades,  there are only few results on non-uniform hypergraphs, see \cite{MZ, PHTZ, JLU} for related work. 
%Johnston and Lu \cite{JLU} established the framework of the Tur\'an theory for non-uniform hypergraphs. 
Motivated by the study of non-uniform Tur\'an problems\cite{LuBai}, in this paper we study a Tur\'an-type problem on the edge-colored graphs and show an application on Tur\'an problems of non-uniform hypergraphs of edge size $2$ or $3$.
%}

A hypergraph $H=(V, E)$ consists of a vertex set $V$ and
an edge set $E\subseteq 2^V$. An $r$-uniform hypergraph is a hypergraph such that all its hyperedges have size $r$.
Given positive integers $k\geq r\geq 2$, and a set of colors $C$, with $|C|=k$,  
a $k$-edge-colored $r$-uniform hypergraph  $H$ (for short, $k$-colored $r$-graph) is an $r$-uniform hypergraph that allows $k$ different colors on each hyperedge. We  express $H$ as $H=(V, E_1, E_2, \ldots, E_k)$ where $E_i$ denotes the set of hyperedges colored by $i$th color in $C$, note $E_1, E_2, \ldots, E_k$ do not have to be disjoint. 
We say $H'$ is a sub-graph of $H$,  denoted by $H'\subseteq H$,
if $V(H') \subseteq V(H)$, $E_i(H')\subseteq E_i(H)$ for every $i$. 
Given a family of $k$-colored $r$-graphs $\mathcal H$,  we say $G$ is {\it $\mathcal{H}$-free} if
it doesn't contain any member of $\mathcal{H}$ as a sub-graph.
%One can imagine the $k$-colored $r$-graph as a union of $k$ $r$-graphs on the same vertex set. 
To measure the edge density of $G$ of size $n$, we use $h_n(G)$,  which is defined by 
$$ h_n(G):=\sum\limits_{i=1}^{k}\frac{|E_i(G)|}{\binom{n}{r}},$$
where $n=|V(G)|$. 
Then we define the Tur\'an density of $\mathcal H$ as
$$\pi(\mathcal{H}):=\lim_{n\to\infty} \pi_n(\mathcal{H}) =\lim_{n\to\infty}  \max_{G_n} h_n(G_n),$$
where the maximum is taken over all ${\mathcal H}$-free $k$-colored $r$-graphs $G_n$ on $n$ vertices.

By a simple average argument of Katona-Nemetz-Simonovits theorem\cite{KNS}, this limit always exists. 
\begin{theorem}G
For any fixed family $\mathcal H$ of $k$-colored $r$-graphs, $\pi({\mathcal H})$ is well-defined, i.e.  $\lim_{n\to\infty} \pi_n(\mathcal{H})$ exists. 
\end{theorem}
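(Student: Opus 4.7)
The plan is to adapt the classical averaging argument of Katona–Nemetz–Simonovits to the $k$-colored $r$-graph setting. Concretely, I will show that the sequence $\{\pi_n(\mathcal{H})\}_{n\geq r}$ is monotone non-increasing in $n$. Since it is trivially bounded below by $0$, the existence of the limit then follows from the monotone convergence theorem for real sequences.

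To establish monotonicity, I would fix $n > r$ and let $G$ be an $\mathcal{H}$-free $k$-colored $r$-graph on $n$ vertices attaining the maximum $\pi_n(\mathcal{H}) = h_n(G)$ (attainment is legitimate because there are only finitely many $k$-colored $r$-graphs on a labeled vertex set of size $n$). For each $(n-1)$-subset $S \subseteq V(G)$, consider the induced sub-graph $G[S]$ obtained by restricting every color class $E_i$ to the $r$-subsets of $S$. Since $\mathcal{H}$-freeness is clearly hereditary under vertex deletion, $G[S]$ is still $\mathcal{H}$-free, and hence $h_{n-1}(G[S]) \leq \pi_{n-1}(\mathcal{H})$.

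The heart of the argument is a double counting identity: each $r$-edge $e \in E_i(G)$ is contained in exactly $\binom{n-r}{n-1-r} = n-r$ of the $(n-1)$-subsets of $V(G)$, so summing the counts color-by-color gives
$$\sum_{|S|=n-1} h_{n-1}(G[S]) \;=\; \frac{n-r}{\binom{n-1}{r}} \sum_{i=1}^{k} |E_i(G)|.$$
Dividing by the number $\binom{n}{n-1} = n$ of such subsets and using the identity $\frac{n-r}{n\binom{n-1}{r}} = \frac{1}{\binom{n}{r}}$, one obtains
$$\frac{1}{n}\sum_{|S|=n-1} h_{n-1}(G[S]) \;=\; \sum_{i=1}^{k} \frac{|E_i(G)|}{\binom{n}{r}} \;=\; h_n(G).$$
Since each of the $n$ terms on the left is at most $\pi_{n-1}(\mathcal{H})$, the average $h_n(G)$ is at most $\pi_{n-1}(\mathcal{H})$, yielding $\pi_n(\mathcal{H}) \leq \pi_{n-1}(\mathcal{H})$.

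There is essentially no genuine obstacle here; the proof is entirely routine. The only point worth flagging is that the presence of multiple (possibly overlapping) color classes does not interfere with the averaging, because $h_n$ is additive over colors and the double counting is applied separately to each $E_i$. With monotonicity and boundedness in hand, the sequence $\pi_n(\mathcal{H})$ converges, so $\pi(\mathcal{H})$ is well-defined.
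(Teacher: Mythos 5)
Your proof is correct: the averaging identity $\frac{1}{n}\sum_{|S|=n-1} h_{n-1}(G[S]) = h_n(G)$ is exactly the Katona--Nemetz--Simonovits argument the paper invokes (without spelling it out) when it asserts the limit exists, and your monotonicity-plus-boundedness conclusion is sound. This matches the paper's intended approach, so there is nothing further to add.
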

%\begin{proof}
%To prove the limit exists, we only need to prove that the sequence $\pi_n({\mathcal H})$ is decreasing. 
%\end{proof}

When ${\mathcal H}=\{H\}$, we simply write $\pi(\{H\})$ as $\pi(H)$.
Note that $\pi(\mathcal {H})$ agrees with the definition of 
$$\pi({\mathcal {H}})=\frac{ex(\mathcal{H}, n)}{{n\choose r}},$$
where $ex(\mathcal {H}, n)$ is the maximum number of hyperedges in an $n$-vertex $\mathcal{H}$-free $k$-colored $r$-graph.

In this paper, we let $k=2$.  A $2$-edge-colored graph is a simple graph (without loops) where each edge is colored either red or blue, or both. We call an edge a double-colored edge if it is colored  with both colors.  For short, we call the $2$-edge-colored graphs simply as $2$-colored graphs. 
A $2$-colored graph $H$ can be written 
as a triple $H=(V, E_r, E_b)$ where $V$ is the vertex set,
$E_r\subseteq \binom{V}{2}$ is the set of red edges and $E_b\subseteq \binom{V}{2}$ is the set of blue edges. 
Denote $|E_r|$ and $|E_b|$ as the size of each set, denote $H_r, H_b$ as the induced sub-graphs of $H$ generated by all the red edges and all the blue edges respectively. 
%We say $H$ is a sub-graph of $G$,  denoted by $H\subseteq G$,
%if $V(H) \subseteq V$, $E_r(H)\subseteq E_r$ and $E_b(H)\subseteq E_b$. 
A graph can be considered as a special $2$-colored graph with only one color. 
We say  $H$ is  {\it proper} if there exists at least one edge in each class $E_r$ and $E_b$.
Throughout the paper, we consider the proper $2$-colored graphs.  %{\color{red} 
The results in this paper is finished in year 2018 and recently we were noticed that our study is similar but different to a Tur\'an problem on edge-colored graphs defined by Diwan and Mubayi\cite{diwanmubayi} in which the authors ask for the minimum $m$, such that the $2$-colored graph $G$,  if both its red and blue edges are at least $m + 1$,  contains a given $2$-colored graph $F$? %For some related work on  $2$-colored graphs see \cite{2colored}
What we do differently in this paper is the study of the  Tur\'an density define above for $2$-colored graphs. %}

It is easy to see that $\pi(H)\geq 1$ for any proper $2$-colored graph $H$,  since we can take a complete graph with all edges a single color that does not contain a copy of $H$. 
%Clearly,  $\pi(H_i)\leq \pi(H)$ since any $H_i$-free graph $G$ is also $H$-free,  
%for $i=r, b$. 
%When $H$ is the non-edge-colored graph, the Erd\H{o}s-Stone Theorem gives a relation between the Tur\'an density and its chromatic number. The chromatic number $\mathcal{X}(H)$ of a graph $H$ is the smallest number of colors needed to color the vertices of $H$ so that no two adjacent vertices share the same color. 
%\begin{theorem}[\it Erd\H{o}s-Stone Theorem]\cite{ES}
%Let $H$ be a  graph, then $\pi(H)=1-\frac{1}{\mathcal{X}(H)-1}$. 
%\end{theorem}
%The above result is not true when $H$ is a proper $2$-colored graph, but it implies the following simple fact: 
%$$\pi(H)\geq \max_{i\in\{r, b\}} \left(2-\frac{1}{\mathcal{X}(H_i)-1}\right). $$
%
%Since for any proper $2$-colored graph $H$,  it is trivial that $\pi(H)\leq  2$. Then $\max_{i} \left(2-\frac{1}{\mathcal{X}(H_i)-1}\right) \leq \pi(H)<2 $.
%Erd\"os and Stone's theorem asserts that $\pi(H)=0$, when $H$ is a non-edge-colored bipartite graph. 
%These are  called {\it degenerate} graphs.

\begin{definition}
A  $2$-colored graph  $H$ is called bipartite if $H$ does not contain an odd cycle of length $l\geq 3$ with all edges colored by the same color. 
\end{definition}

For a $2$-colored graph  $H$, we say $H$ is {\it degenerate} if $\pi(H)=1$.
Note that if $H$ is degenerate, then it must be bipartite. 
Otherwise, say $H_b=(V,E_b)$ is not a bipartite
graph, one may consider the union of the red complete graph and an extremal graph respect to
$H_b$, then the resulting graph is a $H$-free $2$-colored graph with edge density at least
$1 + \pi(H_b)>1$, a contradiction. 

In this paper, 
we will determine the Tur\'an densities of all $2$-colored bipartite graphs and characterize the $2$-colored graphs achieving these Tur\'an values. The notation $[n]$ is the set of $\{1, \ldots, n\}$. For convenience, we represent an edge $\{a, b\}$  by $ab$. 

\begin{definition}
Given two $k$-colored $r$-graphs $G$ and $H$, a {\it graph homomporhism}
is a map $f\colon V(G)\to V(H)$ which keeps the colored edges, that is, $f(e)\in E_i(H)$ whenever
$e\in E_i(G)$ for $i\in [k]$.  We say $G$ is {\it $H$-colorable} if there is a graph homomorphism
from $G$ to $H$.
\end{definition}

\begin{theorem}\label{allresultsfor2colored}
The Tur\'an densities of all bipartite $2$-colored graphs are in the set $\left\{1, \frac{4}{3}, \frac{3}{2}\right\}$. 
\begin{enumerate}
\item A $2$-colored graph $H$ is degenerate if and only if it is $T$-colorable,
where $T$ is the $2$-colored graph with vertices $[4]$ and red edges $\{12, 13, 34\}$, blue edges $ \{12, 23, 34\}$.
\item A $2$-colored graph $H$ satisfies  $\pi(H)=\frac{4}{3}$, then $H$ must be $H_8$-colorable but not $T$-colorable, where $H_8$ is the $2$-colored graph with vertices $[8]$ and red edges 
  $E_r(H_8)=\{12, 13 ,24,  34,  16, 37, 48 , 25, 35, 18, 46, 27\}$, 
blue edges
 $E_b(H_8)=\{56, 57, 68, 78, 26, 15, 47, 38, 35, 18, 46, 27\}.$  
% $$\{1,2\}, \{1,3\}, \{2,4\},  \{3,4\}, \{1,6\}, \{3,7\},  \{4,8\}, \{2,5\},\{3,5\}, \{1,8\}, \{4,6\}, \{2,7\}$$
% and blue edges
%$$\{5,6\}, \{5,7\},\{6,8\},  \{7,8\}, \{2,6\}, \{1,5\}, \{4,7\},  \{3,8\},\{3,5\},  \{1,8\}, \{4,6\}, \{2,7\}.$$
\item A $2$-colored bipartite graph $H$ satisfies $\pi(H)=\frac{3}{2}$, then $H$ is not $H_8$-colorable.
\end{enumerate}
\end{theorem}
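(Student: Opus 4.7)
The plan is to establish matching upper and lower bounds at each of the three candidate values $1$, $4/3$, $3/2$, anchored on three canonical extremal constructions: a monochromatic $K_n$ for density $1$; the balanced blow-up of the doubly-colored triangle $T_3$ (three vertices, every pair both red and blue) for density $\to 4/3$; and the balanced blow-up of the two-vertex looped graph $S = (\{a,b\};\, aa \in E_r,\, bb \in E_b,\, ab \in E_r \cap E_b)$, with the internal monochromatic cliques forced by the self-loops, for density $\to 3/2$. A direct count verifies these limiting densities.

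For the lower bounds, $\pi(H) \geq 1$ is immediate from a monochromatic $K_n$. When $H$ is not $T$-colorable, I would use the $T_3$-blow-up as an $H$-free witness whenever the underlying simple graph of $H$ has chromatic number $\geq 4$ (since $H \to T_3$ is equivalent to a proper $3$-coloring of its underlying graph), handling the residual case (underlying $3$-colorable but color constraints still obstructing $T$-coloring) via a refined density-$4/3$ pattern adapted to that obstruction. When $H$ is bipartite and not $H_8$-colorable, the key step is the equivalence ``bipartite $H$ is $S$-colorable iff bipartite $H$ is $H_8$-colorable'', which I would establish via the product structure of $H_8$ on $\{R_1, R_2\} \times \{B_1, B_2\} \times \{a, b\}$ (encoding its red bipartition, blue bipartition, and $S$-partition $\{1,2,3,4\}\mid\{5,6,7,8\}$): this labeling gives an explicit homomorphism from every bipartite $S$-colorable $H$ to $H_8$. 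Such $H$ is then not $S$-colorable, so the $S$-blow-up is $H$-free with density $\to 3/2$.

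For the upper bounds, I would use the supersaturation/blow-up principle: if $H \to F$ then $H$ embeds into the balanced blow-up $F^{(|V(H)|)}$, so $\pi(H) \leq \pi(F^{(m)})$, and a standard blow-up invariance gives $\pi(F^{(m)}) = \pi(F)$. Applied with $F = T$ and $F = H_8$, this yields $\pi(H) \leq 1$ and $\pi(H) \leq 4/3$ under the respective colorability hypotheses. For the universal bound $\pi(H) \leq 3/2$ on bipartite $H$, one further verifies that any $2$-colored graph of density exceeding $3/2$ contains arbitrarily large blow-ups of $S$, into which every bipartite $H$ embeds via its red- and blue-independent sides.

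The main obstacle is the supersaturation bound $\pi(H_8) \leq 4/3$: at density exactly $4/3$ the $T_3$-blow-up already gives a rich $H_8$-free construction, so any argument must pin down precisely what crossing the threshold forces. My approach is a regularity-style decomposition of the host graph followed by a density analysis on the reduced graph, showing that exceeding $4/3$ compels the matching structure of the four doubly-colored edges $\{35, 18, 46, 27\}$ together with the surrounding red-only and blue-only bipartite crossings that realize $H_8$; once the reduced graph contains $H_8$, a standard blow-up lemma extracts $H_8^{(m)}$ inside the host graph. Pairing these upper bounds with the lower-bound constructions then yields the full three-way classification of bipartite $2$-colored graphs by their Tur\'an density, and in particular part~(3) follows by combining the classification with the contrapositive of $H_8$-colorability forcing $\pi(H)\le 4/3<3/2$.
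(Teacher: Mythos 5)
Your proposal has genuine gaps at the two places where the real work lies. First, your universal upper bound $\pi(H)\le \frac32$ for bipartite $H$ rests on the claim that ``every bipartite $H$ embeds into large blow-ups of $S$'' (your two-vertex looped pattern, i.e.\ the pattern of the paper's construction $G_C$). That claim is false: the graph $T_1=([4],\{12,34,13,24\},\{12,34,14,23\})$ is bipartite (both its red and blue subgraphs are $4$-cycles) but does not embed in any blow-up $S(t)$ --- the double edges of $S(t)$ are exactly the crossing pairs, which forces $1,2$ and $3,4$ onto opposite sides, and then one of the single-colored edges $13,24,14,23$ always lands inside the wrong side; indeed $\pi(T_1)=\frac32$ precisely because $G_C$ (the $S$-blow-up) avoids it. The paper's route is different and unavoidable here: it shows every bipartite $H$ admits a homomorphism into $T_1$ itself (built from the red and blue bipartitions), and proves $ex(n,T_1)=\binom n2+\lfloor n^2/4\rfloor$ by a Mantel-type induction, giving $\pi(H)\le\pi(T_1)=\frac32$. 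Your preliminary claim that density $>\frac32$ forces large $S$-blow-ups is also unproved (it amounts to $\pi(S(t))\le\frac32$ for a non-bipartite graph, which nothing in your sketch or the paper establishes), but even granting it the embedding step fails. Second, you yourself flag $\pi(H_8)\le\frac43$ as ``the main obstacle'' and offer only a regularity-style hope; this is exactly the content the paper supplies by a concrete intermediate structure: $H_8$ is simultaneously $T_1$- and $T_2$-colorable (with $T_2=([4],\{12,14,23,24,34\},\{12,13,14,23,34\})$), and an explicit induction shows any $\{T_1,T_2\}$-free graph has at most $\binom n2+\lfloor n^2/6\rfloor$ edges, whence $\pi(H_8)\le\pi(\{T_1,T_2\})\le\frac43$. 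Without some such identified finite obstruction set, your reduced-graph argument is not a proof.

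There is also a gap in your lower bound ``not $T$-colorable $\Rightarrow\pi(H)\ge\frac43$''. Your witness, the balanced blow-up of the doubly-colored triangle, excludes $H$ only when the underlying simple graph of $H$ has chromatic number at least $4$; the ``residual case'' you defer is the whole point. For example the paper's $4$-vertex graph $T_3$ (double edges $12,34$, blue $13$, red $14,23$) has $3$-chromatic underlying graph, hence embeds into your doubly-colored tripartite construction, yet it is not $T$-colorable and has $\pi(T_3)=\frac43$. The paper handles all such cases with the asymmetric two-part patterns $G_A$ and $G_B$ (each of density $\frac43$) together with the product lemma: if $H$ is not $T$-colorable then, since the product of the $G_A$- and $G_B$-patterns is a blow-up of $T$, $H$ fails to be colorable by one of them, and that construction is an $H$-free family of density $\frac43$. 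Your identification of $H_8$ as the product of the red-bipartition, blue-bipartition and $S$-partition patterns is correct and matches the paper's Remark on $G_C,G_D,G_E$, and your closing derivation of Item 3 from the $\frac43$ upper bound is fine; but as it stands the proposal is missing the proofs of both decisive upper bounds and of the $\frac43$ lower bound in the $3$-chromatic case.
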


%A $\{2, 3\}$-hypergraph is a non-uniform hypergraph that every edge has the cardinality $2$ or $3$.
%We will show a relationship between the the Tur\'an density of $\{2, 3\}$-hypergraphs and the the Tur\'an density of $2$-colored graphs (see Theorem \ref{relation22and23}), then apply this relationship to determine the Tur\'an density of some special $\{2, 3\}$-hypergraphs. 
%\begin{theorem}\label{relation22and23}
%Let $H=(V, \ E_r, \ E_b)$ be a $2$-colored graph, and $H'=(V', H^2, \ H^3)$ be a $\{2, 3\}$-graph obtained from $H$ by adding a new vertex $v$ such that $V'=V\cup \{v\}$ and $H^2=E_r$, and $H^3=\{v\}\times E_b$. Then $\pi(H')\leq \pi(H)$. 
%\end{theorem}

Our consideration on  $2$-colored graphs is motivated by the study of Tur\'an density of non-uniform hypergraphs, which was first introduced by Johnston and Lu \cite{JLU}, then studied by us \cite{LuBai}.  We refer a  non-uniform hypergraph $H$ as $R$-graph, where $R$ is the set of all the cardinalities of edges in $H$.  
A {\it degenerate} $R$-graph $H$ has the smallest Tur\'an density, $|R|-1$, where $|R|$ is the size of set $R$. 
%{\color{red}
For a history of degenerate extremal graph problems, see \cite{dege}. Let $r\geq 3$, for $r$-uniform hypergraphs the $r$-partite hypergraphs are degenerate and they generalize the bipartite graphs. %}
An interesting problem is what the degenerate non-uniform hypergraph look like?
In \cite{LuBai}, we prove that except for the case $R\neq \{1, 2\}$, there always exist non-trivial degenerate $R$-graphs for any set $R$ of two distinct positive integers.
The degenerate $\{1, 3\}$-graphs are characterized in \cite{LuBai}, what about the the degenerate $\{2, 3\}$-graphs? 
In the last section of this paper, we will apply the  $2$-colored graphs to bound the Tur\'an density of some $\{2, 3\}$-graphs. 

The paper is organized as follows: in section \ref{lemmas}, we show some  lemmas on 
the $k$-colored $r$-uniform hypergraphs; 
in section \ref{turanofedgecoloredgraph}, we classify the Tur\'an densities of all $2$-colored bipartite graphs;  in section \ref{23},  we give an application of the Tur\'an density of $2$-colored graphs on $\{2, 3\}$-graphs. 

\section{Lemmas on k-colored r-graphs}\label{lemmas}
\subsection{Supersaturation and Blowing-up}
In this section, we give some definitions and lemmas related to the $k$-colored $r$-graphs for $k\geq r \geq 2$. These are natural 
generalizations from the  Tur\'an theory of graphs.
% For convenience, we represent an edge $\{a_1, \ldots, a_r\}$ 
%by $a_1\cdots a_r$. 
We first define the {\it blow-up} of a $k$-colored $r$-graph. 
\begin{defn}[\it Blow-up Families]
For any $k$-colored $r$-graph $H$ on $n$ vertices and positive integers $s_1, s_2, \ldots, s_n$, the {\it blow-up} of  $H$ is a new $k$-colored $r$-graph, denoted by $H(s_1, s_2, \ldots, s_n)=(V, \ E_1, \ldots, \ E_k)$, satisfying 
\begin{itemize}
\item$ V:= \bigsqcup^n_{i=1}V_i$, where $|V_i|=s_i$, 
\item $E_j=\bigcup_{F\in E_j(H)} \prod_{i\in F}V_i$, for each $j\in [k]$. 
%\item $E_b=\bigcup_{F\in E_b(H)} \prod_{i\in F}V_i$.
\end{itemize}
When $s_1=s_2=\cdots=s_n=s$, we simply write it as $H(s)$. 
\end{defn}
%There is a natural generalization of the supersaturation lemma and blow-up in $k$-colored $r$-graphs.
\begin{lemma}[Supersaturation]
For any $k$-colored $r$-graph $H$ and $a>0$,  
then there are $b, n_0>0$ so that if $G$ is a $k$-colored $r$-graph on 
$n>n_0$ vertices with $h_n(G)> \pi(H)+ a$ then $G$ contains at least $b{n \choose v(H)}$ copies of $H$. 
\end{lemma}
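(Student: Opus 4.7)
The plan is to mimic the classical Erd\H{o}s--Simonovits supersaturation argument, adapted to the $k$-colored $r$-graph setting. The only structural feature of the colored setting that matters is that the density $h_n$ is a sum of densities per color class, so the trivial upper bound $h_n(G)\le k$ replaces the uncolored bound of $1$. Everything else carries over by a standard averaging argument.

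First, I would invoke the definition of $\pi(H)$ together with the existence statement (Theorem~G) to pick an integer $m$ large enough that $\pi_m(\mathcal{H})<\pi(H)+a/2$, i.e.\ every $k$-colored $r$-graph on $m$ vertices with $h_m>\pi(H)+a/2$ already contains a copy of $H$. This $m$ will be fixed for the rest of the argument and $n_0$ will be chosen with $n_0\gg m$.

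Second, I would perform a double-counting over the pairs $(S,G[S])$ with $S\in\binom{V(G)}{m}$. A direct computation shows that the average of $h_m(G[S])$ over all $m$-subsets $S$ equals $h_n(G)$, because for each colored edge $e\in E_i(G)$ the proportion of $m$-subsets containing $e$ is $\binom{n-r}{m-r}/\binom{n}{m}=\binom{m}{r}/\binom{n}{r}$. Thus the assumption $h_n(G)>\pi(H)+a$ forces a positive fraction of $m$-subsets to be ``heavy'', in the sense that $h_m(G[S])>\pi(H)+a/2$. Quantitatively, splitting the $m$-subsets into heavy and light, using the trivial bound $h_m\le k$ on the heavy ones and $h_m\le\pi(H)+a/2$ on the light ones, gives
\[
\pi(H)+a \;<\; (1-p)\bigl(\pi(H)+a/2\bigr)+p\,k,
\]
so the fraction $p$ of heavy $m$-subsets satisfies $p\ge a/(2k)$ (some explicit positive constant depending only on $a$ and $k$).

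Third, each heavy $m$-subset contains at least one copy of $H$ by the choice of $m$. A second double-count, now over pairs $(\text{copy of }H,\,S)$ with the copy contained in $S$, gives that the number of copies of $H$ in $G$ is at least
\[
\frac{p\binom{n}{m}}{\binom{n-v(H)}{m-v(H)}}
\;=\;p\,\frac{\binom{n}{v(H)}}{\binom{m}{v(H)}},
\]
so setting $b:=p/\binom{m}{v(H)}$ completes the proof.

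There is no genuine obstacle here: the only mild point is keeping track of the fact that $h_n$ is a sum across color classes rather than a single density, which only changes the trivial upper bound from $1$ to $k$ and therefore only affects the explicit value of $b$. The choice of $m$ via the definition of $\pi(H)$, the averaging identity for induced $m$-subgraphs, and the final double-count over copies of $H$ are all entirely standard and transfer verbatim.
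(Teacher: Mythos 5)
Your proposal is correct and follows essentially the same route as the paper: fix $m$ (the paper's $t$) so that density above $\pi(H)+a/2$ on $m$ vertices forces a copy of $H$, average $h_m(G[S])$ over $m$-subsets to find a positive fraction of heavy subsets, then double count copies of $H$ to get $b\binom{n}{v(H)}$. If anything, your accounting is slightly more careful than the paper's, since you use the trivial bound $h_m\le k$ explicitly to get the heavy fraction $p\ge a/(2k)$, where the paper's contradiction argument implicitly treats the excess per heavy set as at most $1$.
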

\begin{proof}
Since we have $\lim_{n \to \infty} \pi_n(H)=\pi(H)$, there exists an $n_0>0$ so that if $t>n_0$
then $\pi_t(H)< \pi(H)+\frac{a}{r}$. Suppose $n>t$, and $G$ is a $k$-colored $r$-graph on 
$n$ vertices with $h_n(G)> \pi(H)+ a$. 
Let $T$ represent any $t$-set, then $G$ must contain at least $\frac{a}{2}{n \choose t}$ $t$-sets $T\subseteq V(G)$
satisfying $h_t(G[T])> (\pi(H)+\frac{a}{2})$. Otherwise, we would have 
\begin{align*}
\sum\limits_{T}h_t(G[T])&\leq {n \choose t}(\pi(H)+\frac{a}{2})+ \frac{a}{2}{n \choose t}\\
&=(\pi(H)+a){n \choose t}.
\end{align*}
%\comment{In fact, large deviation estimates imply that almost all t-sets $T \subseteq V (G)$ have this property when $t$ %is large}

But we also have 
\begin{align*}
{t\choose r}\sum\limits_{T}h_t(G[T]) &={n-r \choose t-r}{n\choose r}h_n(G)\\
& >{n-r \choose t-r}{n \choose r}(\pi(H)+ a)\\
&= (\pi(H)+ a){t\choose r}{n \choose t}.
\end{align*}
A contradiction. Since $t>n_0$, it follows that each of the $\frac{a}{2}{n \choose t}$ $t$-sets $T\subseteq V(G)$ satisfying $h_t(G[T])> (\pi(H)+\frac{a}{r})$ contains a copy of $H$, so the number of copies of $H$ in G is at least $\frac{a}{2}{n \choose t}/{n-v(H) \choose t-v(H)}=\frac{a}{2}{n \choose v(H)}/{t \choose v(H)}$.  Let $b=\frac{a}{2}/{t\choose v(H)}$, the result follows. 
\end{proof}

The `blow-up' does not change the Tur\'an density of $k$-colored $r$-graphs.  The following result and proof are natural generalization of results on uniform hypergraphs, see \cite{Kee}. 
\begin{lemma} \label{squeeze}
  For any $s>1$ and any $k$-colored $r$-graph $H$, $\pi(H(s))=\pi(H).$
\end{lemma}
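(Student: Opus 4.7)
The plan is to verify the two inequalities separately. The direction $\pi(H(s)) \geq \pi(H)$ is immediate because $H$ embeds into $H(s)$ as a ``diagonal'' sub-graph (one representative chosen from each blow-up class $V_i$), so every $H$-free $k$-colored $r$-graph is automatically $H(s)$-free. Hence each $H$-free configuration on $n$ vertices is also a valid witness for $\pi_n(H(s))$, and passing to the limit gives the desired bound.

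For the harder direction $\pi(H(s)) \leq \pi(H)$, I would use supersaturation combined with the Erd\H{o}s partite trick. Fix $\epsilon > 0$ and let $G$ be any $k$-colored $r$-graph on $n$ vertices with $h_n(G) \geq \pi(H) + \epsilon$. By the preceding supersaturation lemma, $G$ contains at least $b\binom{n}{v}$ copies of $H$, where $v = v(H)$ and $b = b(\epsilon) > 0$. Re-counting labeled copies, this yields at least $c n^v$ color-preserving injections $\phi\colon V(H) \to V(G)$ for some $c = c(\epsilon) > 0$.

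The next step is to partition $V(G)$ uniformly at random into $v$ classes $W_1, \dots, W_v$. A linearity-of-expectation computation shows that a $v^{-v}$-fraction of these labeled copies respects the partition, i.e.\ satisfies $\phi(i) \in W_i$ for every $i \in V(H)$. Fixing such a partition and recording the respectful copies as edges of an auxiliary $v$-partite $v$-uniform hypergraph $\mathcal{A}$ on $W_1 \sqcup \cdots \sqcup W_v$, we obtain $\Omega(n^v)$ edges in $\mathcal{A}$. By the Erd\H{o}s extremal theorem for complete $v$-partite $v$-uniform hypergraphs (the multipartite hypergraph analogue of K\H{o}v\'ari--S\'os--Tur\'an), once $n$ is large enough $\mathcal{A}$ must contain a complete sub-hypergraph $K^{(v)}_{s,\dots,s}$. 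Translating back, this supplies disjoint sets $U_i \subseteq W_i$ with $|U_i| = s$ such that every transversal in $U_1 \times \cdots \times U_v$ is a color-preserving injection $V(H) \hookrightarrow V(G)$; unwinding the definition of blow-up, these $vs$ vertices span a copy of $H(s)$ in $G$.

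The main obstacle I anticipate is correctly invoking the multipartite extremal theorem in the $v$-uniform $v$-partite setting and, more subtly, the need to randomize the partition \emph{before} extracting the complete sub-hypergraph: without this step, the Erd\H{o}s-type extraction could return $s$-element ``slot sets'' sharing vertices across slots, and one would only obtain a non-injective homomorphism rather than a genuine embedding of $H(s)$. Once the partition trick is in place, the supersaturation input and the Erd\H{o}s hypergraph output combine to force a copy of $H(s)$ in every sufficiently large $G$ of density above $\pi(H) + \epsilon$, yielding $\pi(H(s)) \leq \pi(H) + \epsilon$ for each $\epsilon > 0$ and closing the proof.
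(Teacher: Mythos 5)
Your proof is correct, and its skeleton is the same as the paper's: supersaturation produces $\Omega(n^{v})$ copies of $H$ (with $v=v(H)$), these are recorded as edges of an auxiliary $v$-uniform hypergraph, and the Erd\H{o}s extremal theorem for complete $v$-partite $v$-uniform hypergraphs (Tur\'an density zero) extracts the structure that yields $H(s)$; the easy direction is handled identically. Where you genuinely diverge is in how the extracted copies are forced to use a \emph{consistent} assignment of the vertices of $H$ to the $v$ slots. The paper works with the unlabeled copy hypergraph $U$, first finds a large complete multipartite $K^{v(H)}_{v(H)}(S)$ in $U$, then colors its edges by the $v!$ possible orderings and uses a Ramsey/pigeonhole step (taking $S$ much larger than $s$) to pass to a monochromatic $K^{v(H)}_{v(H)}(s)$. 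You instead impose the labeling up front via the partite trick: a uniformly random partition into classes $W_1,\dots,W_v$ retains a $v^{-v}$ fraction of the labeled copies, each surviving $v$-set carries a unique partition-respecting labeling, and a single application of the Erd\H{o}s multipartite theorem then delivers sets $U_i\subseteq W_i$ of size $s$ that are automatically disjoint and consistently labeled; your final observation that every blow-up edge extends to a full transversal, so the $U_i$ indeed span a copy of $H(s)$, closes the argument correctly. The trade-off is mild: your route avoids the two-stage $S$-then-$s$ extraction and the Ramsey coloring at the cost of a constant-factor density loss from the random partition, while the paper's route needs no partition but must manage the ordering bookkeeping inside the extracted $K^{v(H)}_{v(H)}(S)$. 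Both establish $\pi(H(s))\leq \pi(H)+\epsilon$ for every $\epsilon>0$, as required.
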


\begin{proof}
First, since any $H$-free $r$-graph $G$ is also $H(s)$-free, we have $\pi(H)\leq \pi(H(s))$. 
We will show that for any $a>0$, $\pi(H(s))< \pi(H)+a$. 

By the supersaturation lemma, for any  $a>0$, there are $b, n_0>0$ so that if $G$ is a $k$-colored $r$-graph on 
$n>n_0$ vertices with $h_n(G)> \pi(H)+ a$ then $G$ contains at least $b{n \choose v(H)}$ copies of $H$.
Consider an auxiliary $v(H)$-graph $U$ on the same vertex set as
$G$ such that the edges of $U$ correspond to copies of $H$ in $G$. Note that $U$ contains at least 
$b{n \choose v(H)}$ edges. For any $S>0$, if $n$ is large enough we can find a copy $K$ of 
$K_{v(H)}^{v(H)}(S)$ in $U$. Note that $K$ is the complete $v(H)$-partite v(H)-graph with $S$ vertices
 in each part, then $\pi(K)=0$. 
Fix one such $K$ in $U$. Color each
edge of $K$ with one of the $v(H)!$ colors corresponding to the possible orderings with which
the vertices of $H$ are mapped into the parts of $K$. By Ramsey theory, one of
the color classes contains at least $S^v/v!$ edges. For large enough $S$ (such that $S^v/v!\geq s$)
it follows that $U$ contains a monochromatic copy of $K_{v(H)}^{v(H)}(s)$, which gives a copy of $H(s)$ in $G$. 
Thus $\pi(H(s))< \pi(H)+a$. 
\end{proof}

%Given two $k$-colored $r$-graphs $G$ and $H$, a {\it graph homomporhism}
%is a map $f\colon V(G)\to V(H)$ which keeps the colored edges, that is, $f(e)\in E_i(H)$ whenever
%$e\in E_i(G)$ for $i\in [k]$.  
Note when we say $G$ is $H$-colorable, it is equivalent to say $G$ is a sub-graph of a blow-up of $H$. 
It is easy to prove the following lemmas.
\begin{lemma}\label{homomorphism}
Let $\mathcal H$ be a family of $k$-colored $r$-graphs. 
If $G$ is $H$-colorable for any $H\in {\mathcal H}$, then $\pi(G)\leq \pi({\mathcal H})$.
%\item If $G$ is $H_1$-colorable and $H_2$-colorable, then $\pi(G)\leq \pi(\{H_1, H_2\})$.
%If $G$ is $H$-colorable for all $i=1, 2$, then any $G$-free $2$-colored graph is both $H_1$-free and $H_2$-free, then we have
\end{lemma}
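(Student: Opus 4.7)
The plan is to imitate, for the family $\mathcal H$ in place of a single graph, the two-step argument of supersaturation followed by Ramsey blow-up developed in the Supersaturation lemma and Lemma~\ref{squeeze}. Suppose for contradiction that $F$ is a $G$-free $k$-colored $r$-graph on $n$ vertices with $h_n(F)>\pi(\mathcal H)+a$ for some fixed $a>0$ and arbitrarily large $n$; I will exhibit a copy of $G$ inside $F$, giving the contradiction.

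First I would verify that the Supersaturation lemma extends verbatim to families: the averaging inequality over $t$-subsets uses only the convergence $\pi_t(\mathcal H)\to\pi(\mathcal H)$ and the identity relating $\sum_T h_t(F[T])$ to $h_n(F)$, neither of which is specific to a singleton. This yields at least $b\binom{n}{v}$ copies of members of $\mathcal H$ inside $F$, where $v=\max_{H\in\mathcal H}v(H)$ (one may assume $\mathcal H$ is finite, or reduce to finite subfamilies and use $\pi(\mathcal H)=\inf_{\mathcal H'\text{ finite}}\pi(\mathcal H')$). Pigeonhole then isolates a single $H_0\in\mathcal H$ accounting for at least $(b/|\mathcal H|)\binom{n}{v(H_0)}$ of these copies.

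Next I would replay the Ramsey argument from the proof of Lemma~\ref{squeeze} applied to $H_0$: form the auxiliary $v(H_0)$-uniform hypergraph $U$ on $V(F)$ whose hyperedges are the copies of $H_0$, locate a complete $v(H_0)$-partite sub-hypergraph $K^{v(H_0)}_{v(H_0)}(S)$ inside $U$ with $S$ as large as needed, color its hyperedges by the ordering that maps each into $V(H_0)$, and extract via Ramsey a monochromatic $K^{v(H_0)}_{v(H_0)}(s)$. This embeds $H_0(s)$ into $F$ for any prescribed $s$. Since $G$ is $H_0$-colorable by hypothesis (we assumed $G$ is $H$-colorable for \emph{every} $H\in\mathcal H$, in particular for $H_0$), choosing $s$ large enough that $G\subseteq H_0(s)$ places a copy of $G$ inside $F$, the required contradiction; letting $a\to 0$ yields $\pi(G)\leq\pi(\mathcal H)$. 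The only mild obstacle is confirming that the Supersaturation lemma and the blow-up step of Lemma~\ref{squeeze} transfer cleanly from a single graph to a finite family, which they do up to the one extra pigeonhole step used to concentrate the many copies onto a fixed $H_0$.
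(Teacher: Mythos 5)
Your proof is correct and follows essentially the route the paper intends: the paper states this lemma without proof (as an easy consequence of supersaturation and Lemma~\ref{squeeze}), and your family version of supersaturation, the pigeonhole onto a single $H_0$, and the Ramsey blow-up step giving $H_0(s)\subseteq F\Rightarrow G\subseteq F$ is exactly the argument the paper relies on implicitly, e.g.\ when it passes from $\pi(\{T_1,T_2\})$ to $\pi(\{T_1(s),T_2(s)\})$ in the proof of Lemma~\ref{H8}. The only caveat is cosmetic: since members of $\mathcal H$ may have different orders, count copies of each member separately (or pigeonhole over the $t$-sets before converting to copy counts) rather than lumping them into a single bound $b\binom{n}{v}$ with $v=\max_{H\in\mathcal H}v(H)$.
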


\begin{defn}
Given two $k$-colored $r$-graphs $G_1$ and $G_2$ with vertices set
$V_1$ and $V_2$, we define the product of $G_1$ and $G_2$, denoted by 
$G_1\times G_2=(V_1\times V_2,\ E_1, \ldots, E_k)$,
where for any $i\in[k]$,  
$$E_i= E_i(G_1)\times E_i(G_2)=\{e\times f  \ \vert \ e \in E_i(G_1), f \in E_i(G_2)\},   $$  
where  $e\times f$ is defined through the following way: denote $e=\{v_1, \ldots, v_r\}\in E_i(G_1)$, $f=\{u_1, \ldots, u_r\}\in E_i(G_2),$ then
$e\times f=\cup_{\sigma\in S_r}  \{(v_1, u_{\sigma(1)}),\ldots, (v_r, u_{\sigma(r)})\}$, 
where $\sigma=(\sigma(1), \cdots,\sigma(r))$ takes over all permutations of $[r]$.
%\big\{\{(v_1, u_1), (v_2, u_2)\}, \{(v_1, u_2), (v_2, u_1)\}\big\}$
\end{defn}

\begin{lemma}\label{productcolorable}
A $k$-colored $r$-graph $G$ is $G_1$ and $G_2$ colorable, then it's $(G_1\times G_2)$-colorable. 
\end{lemma}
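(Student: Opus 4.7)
The plan is a direct verification using the obvious coordinate map. Since $G$ is $G_1$-colorable and $G_2$-colorable, fix homomorphisms $f_1\colon V(G)\to V(G_1)$ and $f_2\colon V(G)\to V(G_2)$, each preserving the colored edge structure. Then define $f\colon V(G)\to V(G_1)\times V(G_2)$ by $f(v)=(f_1(v),f_2(v))$, and argue that $f$ is a color-preserving homomorphism into $G_1\times G_2$.

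To verify this, I would take an arbitrary $i$-colored edge $e=\{v_1,\ldots,v_r\}\in E_i(G)$ and show that its image $\{f(v_1),\ldots,f(v_r)\}$ lies in $E_i(G_1\times G_2)$. By hypothesis, $e_1:=\{f_1(v_1),\ldots,f_1(v_r)\}\in E_i(G_1)$ and $e_2:=\{f_2(v_1),\ldots,f_2(v_r)\}\in E_i(G_2)$. A small but necessary check: because $G_1$ and $G_2$ are $r$-uniform, the sets $e_1$ and $e_2$ each have exactly $r$ elements, so both $f_1$ and $f_2$ are injective on $e$ (in particular $f$ itself is injective on $e$). By the definition of the product,
\[
e_1\times e_2 \;=\; \bigcup_{\sigma\in S_r}\,\bigl\{(f_1(v_1),f_2(v_{\sigma(1)})),\,\ldots,\,(f_1(v_r),f_2(v_{\sigma(r)}))\bigr\}\;\subseteq\; E_i(G_1\times G_2).
\]
Choosing $\sigma=\mathrm{id}$ yields exactly $\{f(v_1),\ldots,f(v_r)\}$, which is therefore an $i$-colored edge of $G_1\times G_2$. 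Running over all colors $i\in[k]$ shows that $f$ preserves every $E_i$, so $G$ is $(G_1\times G_2)$-colorable.

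There is not really a substantive obstacle here: the whole statement is essentially unpacking the definitions of product and homomorphism. The only point where one has to be careful is that the product edges are defined via a union over all permutations $\sigma\in S_r$, and one must pick the identity permutation (matched by the common indexing $v_1,\ldots,v_r$ that simultaneously orders both $e_1$ and $e_2$) so that the "diagonal" tuple $(f_1(v_j),f_2(v_j))_j$ appears in $e_1\times e_2$. Once this alignment is noted, the proof reduces to a one-line verification.
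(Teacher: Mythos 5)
Your proposal is correct and follows essentially the same argument as the paper: fix homomorphisms $f_1, f_2$, define the coordinate map $f(v)=(f_1(v),f_2(v))$, and observe that the image of each $i$-colored edge appears (via the identity permutation) among the edges of $E_i(G_1\times G_2)$. Your extra remarks on injectivity and on selecting $\sigma=\mathrm{id}$ only make explicit what the paper's verification leaves implicit.
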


\begin{proof}
There exist two graph homomorphisms 
$f_1: V(G)\mapsto V(G_1)$ and $f_2: V(G)\mapsto V(G_2)$ such that
for any edge $e=\{v_1, \ldots, v_r\}\in E(G)$ (wlog, suppose $e\in E_1(G)$),  
we have  $$f_1(e)=\{f_1(v_1),\ldots, f_1(v_r)\}\in E_1(G_1),$$
and $$ f_2(e)=\{f_2(v_1), \ldots, f_2(v_r)\}\in E_1(G_2).$$ 
%Then we have
%$$f_1(e)\times f_2(e)=\{(f_1(v_1), f_2(v_1)), \ldots,  (f_1(v_r), f_2(v_r))\}.$$

Define a map $f:=f_1\times f_2$ from $V(G)$ to $V(G_1)\times V(G_2)$, 
such that $f(v)=(f_1(v), f_2(v))$ for any $v\in V(G)$. 
Then we have 
$$f(e)
=\{(f_1(v_1), f_2(v_1)), \ldots, (f_1(v_r), f_2(v_r))\}\in f_1(e)\times f_2(e)\subseteq E_1(G_1 \times G_2).$$ 
Thus the map $f$ is a graph homomorphism. 
Hence $G$ is  $(G_1\times G_2)$-colorable. 
\end{proof}

%\begin{lemma}\label{productlessthanunion}
% Suppose that $G_1, G_2$ are $k$-colored $r$-graphs, then $\pi(G_1\times G_2) \leq \pi(\{G_1, G_2\})$.
%\end{lemma}
%\begin{proof}
%Since $\pi(G_1, G_2)=\pi(G_1(s), G_2(s))$, for any $s>1$, let us take 
%%$s=\max \{\vert G_1\vert, \vert G_2\vert \}$.
%$s$ large enough. 
%Let $G_n$ be graph such that 
%$h_n(G_n)\geq \pi(\{G_1, G_2\}) +\epsilon$, for any $\epsilon>0$. 
%Then $G_n$ contains $G_1(s)$ or $G_2(s)$, further $G_n$ contains $G_1(s)\times G_2(s)$.
%Thus $\pi(G_1\times G_2)=\pi(G_1(s)\times G_2(s))\leq  \pi(\{G_1, G_2\}) $.
%\end{proof}
%\comment{wrong should be  $\pi(G_1\times G_2) \geq \pi(\{G_1, G_2\})$}

%\begin{lemma}\label{productlessthanunion}
% Suppose that $G_1, G_2$ are $k$-colored $r$-graphs, then $\pi(G_1\times G_2) \geq \pi(\{G_1, G_2\})$.
%\end{lemma}
%\begin{proof} 
%Let $G_n$ be graph such that 
%$h_n(G_n)\geq \pi(G_1\times G_2) +\epsilon$, for any $\epsilon>0$. 
%Then $G_n$ contains $G_1\times G_2$, further $G_n$ contains $G_1$ or $G_2$.
%Thus $\pi(G_1\times G_2) \geq  \pi(\{G_1, G_2\}) $.
%\end{proof}

\subsection{Construction of 2-colored graphs}
 To compute the lower bound of $\pi(H)$, we need to construct a family of $H$-free $2$-colored graphs $G_n$ with $h_n(G_n)$ as large as possible.
%We call $H$ a graph with loops if it allows every edge to be a multi-set of vertices. 
%For example,  $H_1=\{\}$ is a graph with loops on two vertices $\{x, y\}$ with the red edges $xx, xy$ and blue edge set $xy$.
Here are three useful constructions.
\begin{description}
%\item[ $G_A$: ] Let $H_1$ be a general graph on two vertices $\{x, y\}$ with the red edges $xx, xy$ and blue edge set $xy$.
% A $2$-colored graph $G_A$ is generated by $H_1$ if there exists a partition $V(G_A)= X \cup Y$ such that 
% the red edges either meet two vertices in $X$ or meet one vertex in $X$ and the other in $Y$, the blue edges 
% meet one vertex in $X$ and the other in $Y$.   
\item[ $G_A$: ] A $2$-colored graph $G_A$ on $n$ vertices is generated by partitioning  the vertex set into two parts such that  $V(G_A)= X \cup Y$ and the red edges either meet two vertices in $X$ or meet one vertex in $X$ plus the other 
in $Y$, the blue edges meet one vertex in $X$ plus the other in $Y$. 
In other words, the red edges  $E_r(G_A)=\binom{X}{2} \cup  \binom{X}{1}\times \binom{Y}{1}$ and blue edges  $E_b(G_A)=\binom{X}{1}\times \binom{Y}{1}$. 
 Let $|V(G_A)|=n$, 
 $|X|=xn$ and $|Y|=(1-x)n$ for some real number $x\in (0, 1)$.
 We have
\begin{align*}
  h_n(G_A) &= \frac{{|X|\choose 2}+ 2{|X|\choose 1}{|Y|\choose 1}}{{n\choose 2}}\\
           &=4x-3x^2+o_n(1), 
\end{align*}
which reaches the maximum $\frac{4}{3}$ at $x=\frac{2}{3}$. 
\begin{center}
  \begin{tikzpicture}[scale=0.25]
    \draw (0,0) ellipse (2 and 3);
    \draw (6,0) ellipse (2 and 3);
\draw[draw=red] (-1, -2) -- (-1, 2);
\draw[draw=red] ( 0, 1) -- (6, 1);
\draw[draw=blue, ultra thick] ( 0, -1) -- (6, -1);
\node at (0,-2) {$X$};
\node at (6,-2) {$Y$};
\node at (3,-4) {$G_A$: $h_n(G_A)=\frac{4}{3}+o_n(1)$  at $|X|=\frac{2}{3}n$.};
  \end{tikzpicture}
  \end{center}

\item [ $G_B$: ] It is obtained from $G_A$ by simply exchanging red edges with blue edges.  In other words,  the red edges $E_r(G_B)=\binom{X}{1} \times \binom{Y}{1}$ and blue edges $E_b(G_B)=\binom{X}{2} \cup  \binom{X}{1} \times \binom{Y}{1}$. 
\begin{center}
 \begin{tikzpicture}[scale=0.25]
    \draw (0,0) ellipse (2 and 3);
    \draw (6,0) ellipse (2 and 3);
\draw[draw=blue, ultra thick] (-1, -2) -- (-1, 2);
\draw[draw=red] ( 0, 1) -- (6, 1);
\draw[draw=blue, ultra thick] ( 0, -1) -- (6, -1);
\node at (0,-2) {$X$};
\node at (6,-2) {$Y$};
\node at (3,-4) {$G_B$: $h_n(G_B)=\frac{4}{3}+o_n(1)$  at $|X|=\frac{2}{3}n$.};
  \end{tikzpicture}
\end{center}

\item [ $G_C$: ]
% Let $H_2$ be a general graph on two vertices $\{x, y\}$ with the red edges $xx, xy$ the blue edges
%$ xy, yy$.
% A $2$-colored graph $G_C$ is generated by $H_2$ if there exists a partition $V(G_C)= X \cup Y$ such that 
% the red edges either meet two vertices in $X$ or meet one vertex in $X$ and the other in $Y$, the blue edges 
% either meet two vertices in $Y$ or
% meet one vertex in $X$ and the other in $Y$.   
A $2$-colored graph $G_C$ on $n$ vertices is generated by partitioning the vertex set into two parts such that $V(G_C)= A \cup B$
and the red edges either meet two vertices in $A$ or meet one vertex in $A$ plus the other in $B$, the blue edges 
either meet two vertices in $B$ or meet one vertex in $A$ plus the other in $B$. 
In other words, the red edges 
$E_r(G_C)=\binom{A}{2} \cup  \binom{A}{1} \times \binom{B}{1}$ and blue edges  $E_b(G_C)=\binom{A}{1} \times  \binom{B}{1}\cup \binom{B}{2}$. 
\begin{center}
  \begin{tikzpicture}[scale=0.25]
    \draw (0,0) ellipse (2 and 3);
    \draw (6,0) ellipse (2 and 3);
\draw[draw=red] (-1, -2) -- (-1, 2);
\draw[draw=red] ( 0, 1) -- (6, 1);
\draw[draw=blue, ultra thick] ( 0, -1) -- (6, -1);
\draw[draw=blue, ultra thick] ( 7, -2) -- (7, 2);
\node at (0,-2) {$A$};
\node at (6,-2) {$B$};
\node at (3,-5) {$G_C$: $h_n(G_C)=\frac{3}{2}+o_n(1)$  at $|A|=\frac{1}{2}n$.}; 
\end{tikzpicture}

\end{center}

\item [ $G_D$ and $G_E$: ]
Two variations of $G_C$ are the following constructions:
\begin{center}
\begin{tikzpicture}[scale=0.25]
    \draw (0,0) ellipse (2 and 3);
    \draw (6,0) ellipse (2 and 3);
\draw[draw=blue, ultra thick] (-1, -2) -- (-1, 2);
\draw[draw=red] ( 0, 1) -- (6, 1);
\draw[draw=blue, ultra thick] ( 0, -1) -- (6, -1);
\draw[draw=blue, ultra thick] ( 7, -2) -- (7, 2);
\node at (0,-2) {$X$};
\node at (6,-2) {$Y$};
\node at (3,-5) {$G_D$: $h_n(G_D)=\frac{3}{2}+o_n(1)$.}; 
\end{tikzpicture}
\hfil
\begin{tikzpicture}[scale=0.25]
    \draw (0,0) ellipse (2 and 3);
    \draw (6,0) ellipse (2 and 3);
\draw[draw=red] (-1, -2) -- (-1, 2);
\draw[draw=red] ( 0, 1) -- (6, 1);
\draw[draw=blue, ultra thick] ( 0, -1) -- (6, -1);
\draw[draw=red] ( 7, -2) -- (7, 2);
\node at (0,-2) {$C$};
\node at (6,-2) {$D$};
\node at (3,-5) {$G_E$: $h_n(G_E)=\frac{3}{2}+o_n(1)$.}; 
\end{tikzpicture}
\end{center}
\end{description}

\begin{eg}\label{GAGB=T}
The product of $G_A$ and $G_B$ is a blow-up of $T$, 
where $V(T)=[4]$, the red edges $\{12,13,34\}$ and the blue edges $\{12,23,34\}$:
\begin{center}
\begin{tikzpicture} [scale=0.6,
    every edge quotes/.append style={font=\scriptsize\sffamily, midway, auto, inner sep=1pt,  double},
    vertex/.style={circle, draw=black, fill=white, scale=0.6}
  ]
  \foreach \j/\i in {(0, 2)/1,(2, 2)/3,(2,0)/2,(0, 0)/4}
  \node [vertex] (\i) at \j {\i};
  \draw  [color=red] (1) edge (3);
  \draw  [color=blue, ultra thick] (2) edge (3);
  
\draw [
    postaction={
        decoration={contour lineto},
        draw=blue, ultra thick},
    postaction={
        decoration={contour lineto, contour distance=1.5pt}, 
        draw=red, decorate}
   ] 
   (1) -- (2);
   \draw [
    postaction={
        decoration={contour lineto},
        draw=blue, ultra thick},
    postaction={
        decoration={contour lineto, contour distance=1.5pt}, 
        draw=red, decorate}
   ] 
   (3) -- (4);
    \node at (1, -1) {$T$ };
  \end{tikzpicture}
\end{center}
\end{eg}
We define a map $f:V(H)\to \{1, 2, 3,4 \}$ as follows:
\begin{enumerate}
\item If $v$ appears in $X$ of $G_A$ and in $Y$ of $G_B$, set $f(v)=1$.
\item If $v$ appears in $Y$ of $G_A$ and in $X$ of $G_B$, set $f(v)=2$.
\item If $v$ appears in $X$ of $G_A$ and in $X$ of $G_B$, set $f(v)=3$.
\item If $v$ appears in $Y$ of $G_A$ and in $Y$ of $G_B$, set $f(v)=4$.
\end{enumerate}
One can check $f$ is a graph homomorphism from the product $G_A\times G_B$ to $T$.

\section{Tur\'an density of bipartite 2-colored graphs}\label{turanofedgecoloredgraph}
In this section, we will prove results in Theorem \ref{allresultsfor2colored}. 
%For convenience, we represent an edge $\{a, b\}$ simply by $ab$. 
We first give a boundary to divide the Tur\'an densities of $2$-colored non-bipartite graphs 
and  $2$-colored bipartite graphs. 

\begin{lemma}\label{3/2}

\begin{enumerate}
\item For any $2$-colored non-bipartite graph $H$, $\pi(H)\geq\frac{3}{2}$. 
\item For any $2$-colored bipartite graph $H$, $\pi(H)\leq\frac{3}{2}$. 
\end{enumerate}
\end{lemma}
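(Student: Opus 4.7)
The plan is to prove the two parts separately, using a universal $2$-colored graph to reduce the upper bound (part 2) to a single extremal question.

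For part 1, I would assume without loss of generality that $H$ contains a monochromatic red odd cycle. The construction $G_D$ from the previous section fits: the red subgraph of $G_D$ is the complete bipartite graph between $X$ and $Y$, hence contains no odd cycle, so $G_D$ is $H$-free. Taking $|X| = \lfloor n/2 \rfloor$ and $|Y| = \lceil n/2 \rceil$ gives $h_n(G_D) = (|X||Y| + \binom{n}{2})/\binom{n}{2} \to \frac{3}{2}$, and the symmetric case (blue odd cycle) is handled by swapping colors.

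For part 2, I would first introduce a universal $2$-colored graph $U$ on four vertices $\{(i,j) : i,j \in \{0,1\}\}$, declaring a red edge between two vertices exactly when their first coordinates differ and a blue edge exactly when their second coordinates differ; thus the red edges form a $K_{2,2}$ on one bipartition of $V(U)$ and the blue edges form a $K_{2,2}$ on the transverse bipartition. Since $H$ is bipartite, $H_r$ and $H_b$ each admit a bipartition, say $V(H) = R_1 \cup R_2$ and $V(H) = B_1 \cup B_2$; the map sending each $v \in R_i \cap B_j$ to $(i-1, j-1)$ is a graph homomorphism from $H$ into $U$, so by Lemma \ref{homomorphism} it suffices to show $\pi(U) \leq \frac{3}{2}$.

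For this final step I would argue as follows. If $G$ is a $2$-colored graph on $n$ vertices with $h_n(G) > \frac{3}{2} + \epsilon$, then the double-edge set $S := E_r \cap E_b$ satisfies $|S| \geq |E_r| + |E_b| - \binom{n}{2} > (\frac{1}{2} + \epsilon)\binom{n}{2}$. By the K\H{o}v\'ari--S\'os--Tur\'an theorem (together with the supersaturation lemma above), the graph $S$ contains $K_{s,s}$-subgraphs for $s$ arbitrarily large, and indeed many of them. For any such biclique with parts $X$ and $Y$, the four cross pairs are already double edges, so it only remains to find $x_1, x_2 \in X$ and $y_1, y_2 \in Y$ for which the within-part edges $x_1x_2$ and $y_1y_2$ share a common single color; any such quadruple embeds $U$ under the natural labeling $v_{00}=x_1$, $v_{01}=x_2$, $v_{10}=y_2$, $v_{11}=y_1$. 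I expect this last step to be the main obstacle, since a single biclique need not carry enough within-part structure on its own; the plan is to average over many bicliques (or to invoke a joint regularity-type argument on the red and blue graphs) so that the global red and blue densities, each exceeding $\frac{1}{2} + \epsilon$, force the required within-part same-color edges in some biclique.
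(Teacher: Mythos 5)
You handle Part 1 exactly as the paper does ($G_D$ for a red odd cycle, the color swap $G_E$ for a blue one), and your reduction in Part 2 is also the paper's: your universal graph $U$ is, after relabelling vertices and swapping the two colors, precisely the graph $T_1=([4],\{12,34,13,24\},\{12,34,14,23\})$, and the map you describe is the paper's $T_1$-coloring of a bipartite $2$-colored graph, so by Lemma~\ref{homomorphism} everything indeed reduces to showing $\pi(T_1)\le\frac{3}{2}$.

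The genuine gap is that you never prove this last inequality, and it is where essentially all the work lies. Your sketch --- the double-edge density exceeds $\frac{1}{2}+\epsilon$, hence by K\H{o}v\'ari--S\'os--Tur\'an the double-edge graph contains many copies of $K_{s,s}$, and then one hopes to find within-part edges $x_1x_2$ and $y_1y_2$ of a common color --- stalls exactly at the step you flag. The near-extremal construction $G_C$ shows why plain averaging cannot finish it: there the within-part pairs of any double-edge biclique are all edges, but they are red on one side and blue on the other, so the fact that the red and blue densities are each large does not by itself produce a same-colored pair straddling the two sides; one must extract structure from the surplus $\epsilon$, and neither ``average over many bicliques'' nor ``a joint regularity-type argument'' is actually carried out. (A related warning: many triangles of double edges, which the density does give you via Mantel supersaturation, also do not suffice, since $T_1$ is not contained in any blow-up of the double triangle.) The paper closes this step with an exact extremal statement, $ex(n,T_1)=\binom{n}{2}+\lfloor n^2/4\rfloor$ (Lemma~\ref{Pi(T_1)}), proved by induction on $n$: if the double-edge graph is triangle-free, Mantel's theorem as in Lemma~\ref{turanofdoubletriangle} already bounds $|E(G)|$; if there is a triangle of double edges, every outside vertex sends at most $4$ edges to it, and induction on the remaining $n-3$ vertices completes the count. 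You would need an argument of this kind, or some other completed proof of $\pi(T_1)\le\frac{3}{2}$, for your Part 2 to stand.
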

Before proceeding to the proof, let us see several important $2$-colored graphs whose Tur\'an density achieve the value $\frac{3}{2}$. 
\begin{lemma}\label{turanofdoubletriangle}
Let $K_3$ be a triangle with three double-colored edges, i.e. $K_3=([3], \{12, 13, 23\},  \{12, 13, 23\})$.  Then 
%every $K_3$-free 2-colored graphs on $n$ vertices can have at most
$$ex(n, K_3)={n\choose 2} +\left\lfloor \frac{n^2}{4}\right \rfloor.$$
 In particular, $\pi(K_3)=\frac{3}{2}$. 
\end{lemma}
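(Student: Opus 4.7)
The plan is to reduce the problem to Mantel's theorem via a clean decomposition of the edge count. The key identity is
\[
|E_r(G)| + |E_b(G)| = |E_r(G) \cup E_b(G)| + |E_r(G) \cap E_b(G)|,
\]
that is, the $h_n$-weighted edge count equals the number of edges in the underlying simple graph plus the number of double-colored edges. I would emphasize this as the starting observation since it lets us treat the two colors separately.

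For the upper bound, I would first note $|E_r \cup E_b| \leq \binom{n}{2}$. Then I would argue that if $G$ is $K_3$-free (in the $2$-colored sense of the lemma), the set $D := E_r \cap E_b$ of double-colored edges must be triangle-free as a simple graph: otherwise three edges of $D$ forming a triangle together with the identity map would give a copy of the $2$-colored $K_3$. Applying Mantel's theorem to $D$ yields $|D| \leq \lfloor n^2/4 \rfloor$, and combining with the identity gives $|E_r| + |E_b| \leq \binom{n}{2} + \lfloor n^2/4 \rfloor$.

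For the matching lower bound, I would exhibit an explicit construction on $n$ vertices: partition $V$ into $A \sqcup B$ with $|A| = \lfloor n/2 \rfloor$ and $|B| = \lceil n/2 \rceil$, make every edge across the bipartition double-colored (both red and blue), and color every edge inside $A$ or inside $B$ with a single color, say red. The double-colored edges form the bipartite graph $K_{\lfloor n/2 \rfloor, \lceil n/2 \rceil}$, which is triangle-free, so $G$ contains no double-colored triangle and hence no copy of $K_3$. Counting gives
\[
|E_r| + |E_b| \;=\; 2\!\left\lfloor \tfrac{n^2}{4} \right\rfloor + \left(\binom{n}{2} - \left\lfloor \tfrac{n^2}{4} \right\rfloor\right) \;=\; \binom{n}{2} + \left\lfloor \tfrac{n^2}{4} \right\rfloor,
\]
matching the upper bound and establishing equality in $ex(n, K_3)$. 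Dividing by $\binom{n}{2}$ and sending $n \to \infty$ then gives $\pi(K_3) = 1 + \tfrac{1}{2} = \tfrac{3}{2}$.

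There is no real obstacle here; the only thing to be careful about is explaining why a double-colored triangle is exactly a copy of $K_3$ as defined (and not merely a monochromatic triangle), so that Mantel's theorem applies to $D$ rather than to $E_r$ or $E_b$ individually. Everything else is bookkeeping.
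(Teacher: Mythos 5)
Your proposal is correct and follows essentially the same route as the paper: the upper bound applies Mantel's theorem to the (necessarily triangle-free) set of double-colored edges and adds the trivial bound $\binom{n}{2}$ on the remaining edges, while the lower bound is a balanced bipartition with all crossing edges double-colored (the paper's construction $G_C$ is this same idea, with interior edges red on one side and blue on the other). Your explicit inclusion--exclusion identity and exact edge count for the construction are, if anything, slightly more carefully spelled out than in the paper, but the argument is the same.
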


%{\color{red} 
\begin{proof}
Observe that $K_3$ is not contained in $G_C$, thus $\pi(K_3)\geq \frac{3}{2}$. 
Now we prove the other direction. 
Let $n$ be a positive integer and $G$ be any $K_3$-free $2$-colored graph on $n$ vertices. Construct an auxillary graph $F$ on the same vertex set $V(G)$ and with the edge sets consisting of all double-colored edges in $G$. Let $H=E_r(F)$ consisting of all red colored edges of $F$.  Notice that $H$ is triangle-free. 
By Mantel theorem, we have 
$$|E(H)|\leq \left\lfloor \frac{n^2}{4} \right\rfloor.$$ 
Note that $H$ is a subgraph of $G$ and the number of the rest of edges in $G$ is at most ${n\choose 2}$. 
Therefore, we have
$$|E(G)|\leq {n\choose 2}+ |E(H)|\leq {n\choose 2}+ \left\lfloor \frac{n^2}{4}\right\rfloor=\left(\frac{3}{2}+o(1)\right){n\choose 2}.$$
This implies that $\pi(K_3)=\frac{3}{2}$.
\end{proof}
%}

\begin{corollary}
Let $K_3^{-}=([3], \{12, 13, 23\},\{12, 13\})$, then $\pi(K_3^{-})=\frac{3}{2}$.
\end{corollary}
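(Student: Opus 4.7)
The plan is to establish $\pi(K_3^-)=\tfrac{3}{2}$ by matching upper and lower bounds, each of which falls out almost directly from material already developed. For the upper bound, I would first observe that $K_3^-$ is a $2$-colored sub-graph of $K_3$: both share the vertex set $[3]$ and the red edge set $\{12,13,23\}$, while the blue edges of $K_3^-$ form a subset of those of $K_3$. Consequently any copy of $K_3$ in a host graph $G$ contains a copy of $K_3^-$, so every $K_3^-$-free $2$-colored graph is automatically $K_3$-free. Applying Lemma~\ref{turanofdoubletriangle} then yields $\pi(K_3^-)\le \pi(K_3)=\tfrac{3}{2}$.

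For the lower bound, I would use the construction $G_D$ from Section~\ref{turanofedgecoloredgraph}, whose red edge set is the complete bipartite graph between $X$ and $Y$ and whose blue edge set is the complete graph on $X\cup Y$. Because $K_3^-$ requires three vertices spanning a red triangle, and the red sub-graph of $G_D$ is bipartite and therefore triangle-free, $G_D$ contains no copy of $K_3^-$. Taking $|X|=\lfloor n/2\rfloor$ gives
\[
h_n(G_D)=\frac{\lfloor n^2/4\rfloor}{\binom{n}{2}}+\frac{\binom{n}{2}}{\binom{n}{2}}\longrightarrow \tfrac{3}{2},
\]
so $\pi(K_3^-)\ge \tfrac{3}{2}$, completing the argument.

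There is essentially no real obstacle here: the upper bound is just monotonicity of the Tur\'an density under sub-graph containment applied to the already-proved value $\pi(K_3)=\tfrac{3}{2}$, and the lower bound is a short verification that a bipartite red sub-graph cannot accommodate the red triangle demanded by $K_3^-$. The only thing worth double-checking is the direction of the sub-graph relation $K_3^-\subseteq K_3$ (dropping a blue edge produces a sub-graph, so monotonicity goes the way we need), which is immediate from the definition of $2$-colored sub-graph.
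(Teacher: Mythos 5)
Your proof is correct and takes essentially the same route as the paper: the upper bound via the sub-graph relation $K_3^-\subseteq K_3$ together with Lemma~\ref{turanofdoubletriangle}, and the lower bound from a construction whose red part is triangle-free. The paper obtains the lower bound by citing Lemma~\ref{3/2} (any non-bipartite $2$-colored graph has density at least $\tfrac{3}{2}$), but the proof of that lemma is exactly your $G_D$ argument, so the two are equivalent.
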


\begin{proof}
Since $K_3^{-}$ is a sub-graph of $K_3$, then $\pi(K_3^{-})\leq \frac{3}{2}$. 
By Lemma \ref {3/2},  $\pi(K_3^{-})\geq \frac{3}{2}$.  The result follows. 
\end{proof}

Except the $2$-colored non-bipartite graph, some bipartite graphs also achieves $\pi(H)=\frac{3}{2}$. See
the following $2$-colored graph on four vertices $\{1, 2, 3, 4\}$:
\begin{center}
\begin{tikzpicture} [scale=0.6,
    every edge quotes/.append style={font=\scriptsize\sffamily, midway, auto, inner sep=1pt,  double},
    vertex/.style={circle, draw=black, fill=white, scale=0.6}
  ]
  \foreach \j/\i in {(0, 2)/1,(2, 2)/3,(2,0)/2,(0, 0)/4}
  \node [vertex] (\i) at \j {\i};
  \draw  [color=blue,  ultra thick] (1) edge (4);
  \draw  [color=blue,   ultra thick] (2) edge (3);
  \draw  [color=red] (1) edge  (3);
  \draw  [color=red] (2) edge  (4);
  
\draw [
    postaction={
        decoration={contour lineto},
        draw=blue,  ultra thick},
    postaction={
        decoration={contour lineto, contour distance=1.5pt}, 
        draw=red, decorate}
   ] 
   (1)--(2);
   \draw [
    postaction={
        decoration={contour lineto},
        draw=red},
    postaction={
        decoration={contour lineto, contour distance=1.5pt}, 
        draw=blue, decorate, ultra thick}
   ] 
   (3)--(4);
  \node at (1, -1) {$T_1$};
  \end{tikzpicture}
\end{center}

\begin{lemma}\label{Pi(T_1)}
 $T_1=([4], \{12, 34, 13, 24\}, \{12, 34, 14, 23\}$. Then
 $$ex(n, T_1)={n\choose 2} +\left\lfloor \frac{n^2}{4}\right \rfloor \mbox{ for any } n\not= 3$$
 and $ex(3, T_n)=6$.
In particular, we have
 $\pi(T_1)=\frac{3}{2}$. 
\end{lemma}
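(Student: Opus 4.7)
The plan is to handle the two directions separately and rely on a structural lemma extracted from the forbidden configuration.

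For the lower bound, I use the construction $G_C$ from Section \ref{lemmas} with the balanced partition $|A| = \lceil n/2 \rceil$, $|B| = \lfloor n/2 \rfloor$ when $n \neq 3$, and the doubly-colored $K_3$ when $n = 3$. A direct count gives $|E_r(G_C)| + |E_b(G_C)| = 2|A||B| + \binom{|A|}{2} + \binom{|B|}{2} = \binom{n}{2} + |A||B| = \binom{n}{2} + \lfloor n^2/4 \rfloor$, and $T_1 \not\subseteq G_C$ by a short case check: a copy of $T_1$ on $\{v_1, v_2, v_3, v_4\}$ would force the double matching $\{v_1 v_2, v_3 v_4\}$ to cross the partition, the red-only matching $\{v_1 v_3, v_2 v_4\}$ to lie inside $A$, and the blue-only matching $\{v_1 v_4, v_2 v_3\}$ to lie inside $B$; placing $v_1 \in A, v_2 \in B$ and tracing forces $v_3 \in A$ and $v_4 \in B$, but then $v_2 v_3$ crosses the partition, contradicting its placement inside $B$.

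For the upper bound, denote by $D$, $S$, and $M$ the sets of double, single-colored, and missing edges. The identity $|E_r| + |E_b| = 2|D| + |S| = \binom{n}{2} + (|D| - |M|)$ reduces the problem to proving $|D| - |M| \leq \lfloor n^2/4 \rfloor$ for $n \geq 4$; the case $n = 3$ is immediate since $|E_r| + |E_b| \leq 2\binom{3}{2} = 6$. The main ingredient is the following local lemma: if $\{a,b,c\}$ is a triangle in $D$ and $d \in V \setminus \{a,b,c\}$, setting $m(d) = |N_D(d) \cap \{a,b,c\}|$ and $\mu(d)$ equal to the number of missing edges from $d$ to $\{a,b,c\}$, one has $m(d) - \mu(d) \leq 1$. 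The case $m(d) = 3$ is impossible because $K_4 \subseteq D$ immediately yields $T_1 \subseteq G$ (any matching of the $K_4$ works as $M_D$, and the other two matchings are automatically $R$- and $B$-compatible since they lie in $D$); in the case $m(d) = 2$, say $ad, bd \in D$, the choice $M_D = \{ac, bd\}$ together with $ab, ad, bc \in D \subseteq R \cap B$ shows that any coloring of $cd$ as red-only or blue-only completes a valid $T_1$-embedding, forcing $cd \in M$; the cases $m(d) \in \{0,1\}$ yield the bound trivially.

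Using the local lemma, I prove $|D| - |M| \leq \lfloor n^2/4 \rfloor$ by induction on $n$, with $n = 4$ verified directly. If $D$ is triangle-free, Mantel's theorem gives $|D| \leq \lfloor n^2/4 \rfloor$ and we are done. Otherwise, fix a triangle $\{a,b,c\} \subseteq D$ and decompose
\[
|D| - |M| = 3 + \sum_{d \notin \{a,b,c\}} (m(d) - \mu(d)) + \bigl( |D(V')| - |M(V')| \bigr),
\]
where $V' = V \setminus \{a,b,c\}$. The middle sum is at most $n-3$ by the local lemma, and the last term is at most $h(n-3)$ by induction, where $h(n') \leq \lfloor (n')^2/4 \rfloor$ for $n' \geq 4$ and $h(n') \leq \binom{n'}{2}$ for $n' \leq 3$. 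A short arithmetic check of $3 + (n-3) + h(n-3) \leq \lfloor n^2/4 \rfloor$ covers the small cases $n \in \{4,5,6\}$ (where the exterior has $\leq 3$ vertices and one uses the trivial bound) and the general inequality $n + \lfloor (n-3)^2/4 \rfloor \leq \lfloor n^2/4 \rfloor$ for $n \geq 7$.

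The main obstacle is the $m(d) = 2$ case of the local lemma: one must systematically check that \emph{both} possible color assignments to the two non-$M_D$ matchings of $T_1$ fail only when $cd$ is missing. Once that case is in place the remaining induction is mechanical, and matching the target $\lfloor n^2/4 \rfloor$ exactly in all three small regimes $n \in \{4,5,6\}$ (through the transition $\binom{n'}{2} \to \lfloor (n')^2/4 \rfloor$ at $n' = 4$) is what dictates the form of the extremal examples and explains the exception at $n = 3$.
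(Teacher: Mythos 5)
Your proof is correct and is essentially the paper's own argument in different bookkeeping: the identity $|E_r|+|E_b|=\binom{n}{2}+|D|-|M|$ together with your local lemma (that an outside vertex $d$ satisfies $m(d)-\mu(d)\le 1$, equivalently sends at most four colored edges to an all-double triangle, the cases $m(d)=3$ and $m(d)=2$ being exactly the two five-edge configurations the paper exhibits $T_1$ in) and the Mantel/triangle-removal induction reproduce the paper's Case 1/Case 2 scheme, with the paper's $n=6$ correction term absorbed into your hand-checks for exterior size at most $3$. One small slip in the lower bound: red edges of $T_1$ need not map to red-only edges of $G_C$ (crossing edges are also red), so the claim that the red matching must lie inside $A$ is not forced; still, both possible placements of $v_3,v_4$ across the partition fail, so $T_1\not\subseteq G_C$ and the conclusion stands.
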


\begin{proof} 
When $n\leq 3$, the complete $2$-colored graph does not contain $T_1$.
Thus $ex(n, T_1)=0,0,2,6$ when $n=0,1,2,3$, respectively. The assertion holds for $n\leq 3$. It is sufficient to prove for $n\geq 4$.
Since $T_1$ is not contained in $G_C$, 
we have $$ ex(n, T_1)\geq {n\choose 2} +\left\lfloor \frac{n^2}{4}\right \rfloor.$$
%$\pi(T_1)\geq \frac{3}{2}$. 
Now we prove the other direction
by induction. We may assume $n\geq 4$.
Let $n$ be a positive integer and $G$ be any $T_1$-free $2$-colored graph on $n$ vertices.

\begin{description}
\item[Case 1:] $G$ doesn't contain $K_3$ as a subgraph,
by Lemma \ref{turanofdoubletriangle}, 
we have 
$$|E(G)|\leq ex(n, K_3)= {n\choose 2} +\left\lfloor \frac{n^2}{4}\right \rfloor.$$

%by  Tur\'an density of the triangle graph, $h_n(G)\leq 1+\frac{1}{2}=\frac{3}{2}$.

\item[Case 2:]
$G$ contains a copy of $K_3$,
let $V_1=\{a, b, c\}$ be the vertices of this 
triangle and $V_2=V(G)\setminus V_1$. Then there are at most $4$ edges from any vertex in $V_2$ to $V_1$.
To see this, suppose there are $5$ edges from the vertex $w\in V_2$ to $V_1$, 
then there are only two possible graphs on $V_1 \cup \{w\}$ and each of them
contains a copy of $T_1$. A contradiction.

\begin{center}
 \begin{tikzpicture} [
    every edge quotes/.append style={font=\scriptsize\sffamily, midway, auto, inner sep=1pt,  double},
    vertex/.style={circle, draw=black, fill=white, scale=0.6}
  ]
  \foreach \j/\i in {(0, 0)/a,(0, 1)/b,(1, 1)/c,(1, 0)/w}
 \node [vertex] (\i) at \j {\i};
  \draw  [color=red] (a) edge (w);
  
  \draw [
    postaction={
        decoration={contour lineto},
        draw=red},
    postaction={
        decoration={contour lineto, contour distance=1.5pt}, 
        draw=blue, decorate,  ultra thick}
   ] 
   (a)--(b);
   \draw [
    postaction={
        decoration={contour lineto},
        draw=blue, ultra thick},
    postaction={
        decoration={contour lineto, contour distance=1.5pt}, 
        draw=red, decorate}
   ] 
   (a)--(c);

\draw [
    postaction={
        decoration={contour lineto},
        draw=red},
    postaction={
        decoration={contour lineto, contour distance=1.5pt}, 
        draw=blue, decorate,  ultra thick}
   ] 
   (b)--(c);
   \draw [
    postaction={
        decoration={contour lineto},
        draw=blue, ultra thick},
    postaction={
        decoration={contour lineto, contour distance=1.5pt}, 
        draw=red, decorate}
   ] 
   (b)--(w);
   \draw [
    postaction={
        decoration={contour lineto},
        draw=red},
    postaction={
        decoration={contour lineto, contour distance=1.5pt}, 
        draw=blue, decorate,  ultra thick}
   ] 
   (c)--(w);
 \end{tikzpicture}  
 \hfil
  \begin{tikzpicture} [
    every edge quotes/.append style={font=\scriptsize\sffamily, midway, auto, inner sep=1pt,  double},
    vertex/.style={circle, draw=black, fill=white, scale=0.6}
  ]
  \foreach \j/\i in {(0, 0)/a,(0, 1)/b,(1, 1)/c,(1, 0)/w}
  \node [vertex] (\i) at \j {\i};
  \draw  [color=blue,  ultra thick] (a) edge (w);
 \draw [
    postaction={
        decoration={contour lineto},
        draw=red},
    postaction={
        decoration={contour lineto, contour distance=1.5pt}, 
        draw=blue, decorate,  ultra thick}
   ] 
   (a)--(b);
   \draw [
    postaction={
        decoration={contour lineto},
        draw=blue, ultra thick},
    postaction={
        decoration={contour lineto, contour distance=1.5pt}, 
        draw=red, decorate}
   ] 
   (a)--(c);

\draw [
    postaction={
        decoration={contour lineto},
        draw=red},
    postaction={
        decoration={contour lineto, contour distance=1.5pt}, 
        draw=blue, decorate,  ultra thick}
   ] 
   (b)--(c);
   \draw [
    postaction={
        decoration={contour lineto},
        draw=blue, ultra thick},
    postaction={
        decoration={contour lineto, contour distance=1.5pt}, 
        draw=red, decorate}
   ] 
   (b)--(w);
   \draw [
    postaction={
        decoration={contour lineto},
        draw=red},
    postaction={
        decoration={contour lineto, contour distance=1.5pt}, 
        draw=blue, decorate,  ultra thick}
   ] 
   (c)--(w);
  \end{tikzpicture}
 
\end{center}
\end{description}

Applying the inductive hypothesis to $G[V_2]$, we have
 $$|E(G[V_2])|\leq {n-3\choose 2} +\left\lfloor \frac{(n-3)^2}{4}\right \rfloor +\epsilon.$$ Here $\epsilon=1$ if $n=6$ and 0 otherwise.
 
Then the number of edges in $G$ is:

if $n\neq 6$, 
\begin{align*}%\label{eq:T1a}
|E(G)|
&= |E(G[V_1])| + |E(G[V_2])| +|E(V_1, V_2)|\\
&\leq 6 + {n-3\choose 2} +\left\lfloor \frac{(n-3)^2}{4}\right \rfloor  + 4 (n-3)\\
&= {n\choose 2} + n+ \left\lfloor \frac{n^2-6n +9}{4}\right \rfloor \\
%&\leq {n\choose 2} +  \left\lfloor \frac{n^2-2n +9}{4}\right \rfloor 1\\
&=  {n\choose 2} +  \left\lfloor \frac{n^2-2n +9}{4}\right \rfloor \\
&= {n\choose 2} +  \left\lfloor \frac{n^2}{4}\right \rfloor. 
\end{align*}
if $n=6$, 
%{\color{red}will delete this sentence: $ {n\choose 2} +  \left\lfloor \frac{n^2-2n +9}{4}\right \rfloor=23$  when $n=6$, so I write $n=6$ separately.}
\begin{align*}%\label{eq:T1a}
|E(G)|
&= |E(G[V_1])| + |E(G[V_2])| +|E(V_1, V_2)|\\
&\leq 6 + {n-3\choose 2} +\left\lfloor \frac{(n-3)^2}{4}\right \rfloor +\epsilon + 4 (n-3)\\
%&= {n\choose 2} + n+ \left\lfloor \frac{n^2-6n +9}{4}\right \rfloor  +1\\
&=24\\
&={6\choose 2} +  \left\lfloor \frac{6^2}{4}\right \rfloor. 
\end{align*}
The induction step is finished. 
It follows that $h_n(G)\leq \frac{3}{2}$. 
Therefore, 
$\pi(T_1)=\frac{3}{2}$.
\end{proof}

\begin{proof}[Proof of Lemma \ref{3/2}]
For Item 1, let $H$ be a $2$-colored non-bipartite graph, without loss of generality, assume $H$ contains an odd 
cycle with red edges. For any $n$, let $G$ be a $2$-colored graph generated by construction $G_D$, then $H$ can not be contained in $G$. Similarly,  $H$ contains an odd 
cycle with blue edges, then it is not contained in any $2$-colored graph generated by construction $G_E$. 
%For $n$ large enough, let $G$ be a $2$-colored graph on $n$ 
%vertices such that $G_b$ is a complete graph and $G_r$ is a balanced complete bipartite
%graph both on $n$ vertices. We have $h_n(G)=\frac{3}{2}+o_n(1)$. Note that  $G$ is $H$-free. 
Thus $\pi(H)\geq \frac{3}{2}$. 

For Item 2, it is sufficient to prove that 
any $2$-colored bipartite graph $H$ is $T_1$-colorable. 
For any $2$-colored bipartite graph $H$, 
the sub-graph $H_r$ can be partitioned into two disjoint parts $V_1(H_r)$ and $V_2(H_r)$ such that the red edges form a bipartite graph between $V_1(H_r)$ and $V_2(H_r)$. Similarly for the sub-graph $H_b$, the blue edges form 
a bipartite graph between $V_1(H_b)$ and $V_2(H_b)$.  Let $S$ be the set of vertices incidents to double colored edges, then $S$ can be divided into 
four classes: $V_1(H_r) \cap V_1(H_b), V_1(H_r) \cap V_2(H_b), V_2(H_r) \cap V_1(H_b)$ and $V_2(H_r) \cap V_2(H_b)$. 
%The vertices only incident to single colored edges belong to any of subsets of their original sets.  
We define a map $f:V(H)\to \{1, 2, 3,4 \}$ as follows: 
\begin{enumerate}
\item If $v\in V_1(H_r) \cap V_1(H_b)$, set $f(v)=1$.
\item If $v\in V_1(H_r) \cap V_2(H_b)$, set $f(v)=4$.
\item If $v\in V_2(H_r) \cap V_1(H_b)$, set $f(v)=3$.
\item If $v\in V_2(H_r) \cap V_2(H_b)$, set $f(v)=2$.
\item If $uv\in E_r(H)\setminus E_b(H)$, set $f(u)=1, f(v)=2$.
\item If $uv\in E_b(H)\setminus E_r(H)$, set $f(u)=3, f(v)=4$.
\end{enumerate}
One can verify that this map $f$ is a graph homomorphism from $H$ to $T_1$.
Hence,  $\pi(H)\leq \frac{3}{2}$. 
\end{proof}

\subsection{The degenerate 2-colored graphs}
In this part, we will determine the degenerate $2$-colored graphs.
Recall the Example \ref{GAGB=T} in Section 2, we will show that the $2$-colored bipartite graph $T=([4], \{12,13,34\}, \{12,23,34\})$ plays an important role. 

\begin{lemma} \label{Tisdege}
Let $n$ be a positive integer, for any $T$-free $2$-colored graph $G$ on $n$ vertices, $G$ has at most
$\binom{n+1}{2}$ edges. Thus  $T$ is degenerate.
\end{lemma}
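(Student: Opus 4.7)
The plan is to prove the edge bound by induction on $n$. Put $e(G) := |E_r(G)| + |E_b(G)|$ and define the colored degree $d^*(v) := |\{u : uv \in E_r\}| + |\{u : uv \in E_b\}|$, so that $\sum_v d^*(v) = 2 e(G)$. The base cases $n \leq 3$ are immediate from $e(G) \leq 2\binom{n}{2} \leq \binom{n+1}{2}$. For the inductive step it suffices to exhibit a vertex $v$ with $d^*(v) \leq n$: deleting such $v$ gives a $T$-free graph on $n-1$ vertices, so the inductive hypothesis yields $e(G-v) \leq \binom{n}{2}$, and hence $e(G) \leq \binom{n}{2} + n = \binom{n+1}{2}$.

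To produce such $v$, I argue by contradiction and assume $d^*(v) \geq n+1$ for every $v$. Writing $d^*(v) = \tilde d(v) + d_D(v)$, where $\tilde d(v) \leq n-1$ is the degree of $v$ in the underlying simple graph and $d_D(v)$ is its degree in the subgraph $D$ of double edges, we obtain $d_D(v) \geq 2$ for every $v$. Thus $D$ has minimum degree at least $2$. For $n \geq 4$, any such $D$ contains a matching of size $2$: pick any edge $uw$ of $D$; if $D$ has another edge inside $V \setminus \{u, w\}$ we are done, and otherwise every vertex outside $\{u,w\}$ must have both its $d_D \geq 2$ neighbors inside $\{u,w\}$, so any two outside vertices $w_1, w_2$ give the disjoint pair $\{w_1 u, w_2 w\}$. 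Fix disjoint double edges $ab, cd \in D$.

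Next I use the $T$-freeness on the $4$-vertex set $\{a,b,c,d\}$ to constrain the cross-edges $ac, ad, bc, bd$. Enumerating the eight embeddings of $T$ that send the doubles $\{12, 34\}$ of $T$ onto $\{ab, cd\}$ (choosing which of the two plays the role of $12$ and orienting each double), each embedding would complete a copy of $T$ precisely if some vertex of one pair has a red edge to one endpoint of the opposite pair and a blue edge to the other endpoint. The $T$-free condition therefore restricts the coloring of $(ac, ad)$ to $(\emptyset, \emptyset)$, one-absent-and-the-other-arbitrary, $(r, r)$, or $(B, B)$, and analogously for $(bc, bd)$, $(ca, cb)$, $(da, db)$. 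In particular the pattern $(rb, rb)$ at any vertex is forbidden, and any double cross-edge at $v$ forces the other cross-edge at $v$ to be absent in $G$.

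The main obstacle is to combine these cross-constraints with the global hypothesis $d_D(v) \geq 2$ everywhere to exhibit a copy of $T$. The plan is to use the second double at $a$, say $ae$ with $e \neq b$, and case-split on where $e$ lives. If $e \in \{c, d\}$, iterated application of the cross-constraints forces $\{a, b, c, d\}$ into the ``$4$-cycle of doubles'' $\{ab, cd, ac, bd\}$ with both diagonals $ad, bc$ absent; the condition $d^*(v) \geq n+1$ at each of $a, b, c, d$ then pushes many colored edges to outside vertices, and a short counting argument shows that some outside vertex $x$ must acquire enough doubles into $\{a, b, c, d\}$ to create the forbidden $(rb, rb)$ pattern with one of the disjoint double pairs, yielding a copy of $T$. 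If $e \notin \{a, b, c, d\}$, the same analysis is applied to the new disjoint pair $ae, cd$ and propagated (each new double forces new absences via the $(rb, rb)$ rule, which in turn forces further doubles to balance the degree condition) until a $(rb, rb)$ pattern arises somewhere. Either way $G$ contains $T$, contradicting $T$-freeness, so a vertex $v$ with $d^*(v) \leq n$ must exist, completing the induction. The degenerate conclusion $\pi(T) = 1$ is then immediate because $\binom{n+1}{2}/\binom{n}{2} \to 1$.
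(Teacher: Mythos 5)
Your reduction of the inductive step to finding a vertex of total colored degree at most $n$ is where the proof breaks: such a vertex need not exist, so the contradiction you aim for in the final paragraph cannot be derived. Concretely, take $n=3k$, partition the vertices into $k$ triples, make every edge inside a triple double-colored, and color every edge between different triples red only. Any two disjoint double edges lie in two different triples, and all edges between triples are red, so no vertex has a red edge to one endpoint and a blue edge to the other endpoint of a disjoint double edge; hence this graph is $T$-free. It has $\binom{n}{2}+n=\binom{n+1}{2}$ edges, and \emph{every} vertex has colored degree exactly $4+(n-3)=n+1$. This graph satisfies all the hypotheses you accumulate in your contradiction argument (minimum double-degree $2$, a pair of disjoint double edges $ab$, $cd$, all cross edges in the permitted pattern $(r,r)$), which shows that the claimed ``short counting argument'' producing a copy of $T$ cannot exist; that last paragraph was in any case only a sketch, and it is precisely the part that is false rather than merely incomplete.

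The earlier ingredients you set up are fine (the base cases, the deduction $d_D(v)\ge 2$ from $d^*(v)\ge n+1$, the size-$2$ matching in $D$, and the local constraint that a double cross edge at a vertex of one pair forbids any edge from that vertex to the other endpoint of the opposite pair), but single-vertex deletion is structurally the wrong reduction here, as the example above shows the degree bound $n$ is unattainable in extremal configurations. The paper's proof avoids this by deleting larger pieces: if two double edges share a vertex it removes that triple $\{u,v,w\}$ (using $T$-freeness to show no double edges leave it, giving at most $6+3(n-3)$ edges touching it), and otherwise it splits $V$ into $V_1=\{u,v\}\cup X$ and $V_2$, where $X$ is the set of vertices sending three edges to a double edge $uv$, and uses the identity $\binom{|V_1|+1}{2}+|V_1||V_2|+\binom{|V_2|+1}{2}=\binom{n+1}{2}$. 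If you want to keep a degree-based induction, you would have to remove a whole triple (or otherwise handle the disjoint-triangles configuration), not a single vertex.
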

\begin{proof}
We will prove this lemma by induction on $n$. It is trivial for $n=1,2,3, 4$. Assume $n\geq 5$.
We assume that the statement holds for any $T$-free $2$-colored graphs on less than $n$ vertices. 

Let $G=(V, E_r, E_b)$ be a $T$-free $2$-colored graph on $n$ vertices. 
We also assume $G$ contains at least one double-colored edge $uv$, or else 
$|E_r(G)|+|E_b(G)|\leq \binom{n}{2}< \binom{n}{2} <\binom{n+1}{2}$. 
Then $G$ is one of the following cases.
\begin{description}
\item[Case 1:] There exists a vertex $w$ so that both $uw$ and $vw$ are double-colored edges.
Since $G$ is $T$-free, there is no double-colored edges from $u,v,w$ to the rest of the vertices.
By inductive hypothesis, when $G$ is restricted to the complement set of $\{u,v,w\}$, the number of edges of $G[V\setminus \{u,v,w\}]$ is at most
$\binom{n-2}{2}$ edges. Thus, $G$ has at most
$$6+3(n-3)+\binom{n-2}{2}=\binom{n+1}{2}.$$
\item[Case 2:] Now we assume no such $w$ exists. 
Let $X=\{x\in V\colon |E(\{x\},\{u,v\})|\geq 3\}$. 
That is,  for each vertex $x\in X$, $x$ has exactly 3 edges connecting to $u$ and $v$. 
Since $G$ is $T$-free, for each $x\in X$, $x$ has no double-colored edges to any vertex not in $\{u,v,x\}$. In particular, the induced sub-graph $G[X]$ of $G$ has no double-colored edge. 
Let $V_1=\{u,v\}\cup X$ and $V_2$ be the complement set. 
Then the induced sub-graph $G[V_1]$ has at most $$2+3|X|+\binom{|X|}{2}<\binom{|X|+3}{2}=\binom{|V_1|+1}{2}$$
edges.
Applying the inductive hypothesis to $G[V_2]$, then $G[V_2]$ has at most $\binom{|V_2|+1}{2}$ edges. Note that all edges from $X$ to $V_2$ are single colored and the number of edges from $\{u,v\}$ 
to each vertex in $V_2$ is at most $2$. Thus the total number of edges from $V_1$ to $V_2$
is at most $|V_1||V_2|$ edges. Combining these facts together,  we have $G$ has at most $N$ edges, where
$$N=\binom{|V_1|+1}{2}+ |V_1||V_2| + \binom{|V_2|+1}{2}=\binom{|V|+1}{2}. $$
\end{description}
We finish the inductive step. 
Then we have $$\pi(T)=\lim_{n\to\infty} \max_{G_n} h_n(G_n)\leq 
 \lim_{n\to\infty}  \frac{\binom{n+1}{2}}{\binom{n}{2}}= 1, $$ implying $\pi(T)=1$.  $T$ is degenerate.
\end{proof}

%\begin{lemma}\label{degeisTcolorable}
%  Any  degenerate $2$-colored graph is $T$-colorable.
%\end{lemma}
%\begin{proof}
%We define a map $f:V(H)\to \{1, 2, 3,4 \}$ as follows:
%\begin{enumerate}
%\item If $v$ appears in $X$ of constrution (a) and in $Y$ of construction (b), set $f(v)=1$.
%\item If $v$ appears in $Y$ of constrution (a) and in $X$ of construction (b), set $f(v)=2$.
%\item If $v$ appears in $X$ of constrution (a) and in $X$ of construction (b), set $f(v)=3$.
%\item If $v$ appears in $Y$ of constrution (a) and in $Y$ of construction (b), set $f(v)=4$.
%\end{enumerate}
%Now we verify that this map $f$ is a graph homomorphism from $H$ to $T$.
%Note that blue edges in Constrution (a) is crossing the parts and red edges in Construction (b)
%is also crossing. Thus, for any edge $uv\in H$, we have $f(u)\not=f(v)$. I.e., for each $i\in \{1,2,3,4\}$, $f^{-1}(i)$ is an independent set.  Now we consider the possible values of
%$f(u)$ and $f(v)$ for
%any red edge $uv$. It cannot be $\{2,4\}$ since no red edges from $Y$ to $Y$ in Construction (a).    
%It cannot be $\{1,4\}$ since no red edges from $Y$ to $Y$ in Construction (b).  
%It cannot be $\{2,3\}$ since no red edges from $X$ to $X$ in Construction (b).  
%Thus, $f$ maps all red edges to one of $12$, $13$, and $34$ in $T$. Similarly, $f$ maps
%all blue edges to one of $12$, $23$, and $34$ in $T$.
%Therefore $f$ is a graph homomorphism from $H$ to $T$.
%Thus, $H$ is $T$-colorable. The proof is complete. 
%\end{proof}

\begin{proof}[Proof of Item 1 of Theorem \ref{allresultsfor2colored}]
Assume $H$ is a degenerate $2$-colored graph, then it must be $G_A$ and $G_B$-colorable.
By Lemma \ref{productcolorable}, it must be $G_A\times G_B$-colorable. 
Note that the product of this two graphs is $T$-colorable.  
Thus $H$ is $T$-colorable.  By Lemma \ref{Tisdege},   the result follows. 
\end{proof}
\begin{remark}\label{4/3then1}
Above proof implies that for any $2$-colored graph $H$ with 
$\pi(H)<\frac{4}{3}$, $\pi(H)=1$.  
\end{remark}

\subsection{Non-degenerate 2-colored bipartite graphs}
In this part, we will further classify the non-degenerate $2$-colored bipartite graphs. 
By Lemma \ref{3/2}, the largest possible Tur\'an density of a  $2$-colored bipartite graph $H$ is $\frac{3}{2}$, 
so if $\pi(H)< \frac{3}{2}$, it must be contained in the construction $G_c$ and its variations $G_D, G_E$, thus it must be colored by the product of these constructions.  
% \begin{center}
%  \begin{tikzpicture}[scale=0.3]
%    \draw (0,0) ellipse (2 and 3);
%    \draw (6,0) ellipse (2 and 3);
%\draw[draw=red] (-1, -2) -- (-1, 2);
%\draw[draw=red] ( 0, 1) -- (6, 1);
%\draw[draw=blue, ultra thick] ( 0, -1) -- (6, -1);
%\draw[draw=blue, ultra thick] ( 7, -2) -- (7, 2);
%\node at (0,-2) {$A$};
%\node at (6,-2) {$B$};
%\end{tikzpicture}
%\hfil
%\begin{tikzpicture}[scale=0.3]
%    \draw (0,0) ellipse (2 and 3);
%    \draw (6,0) ellipse (2 and 3);
%\draw[draw=blue, ultra thick] (-1, -2) -- (-1, 2);
%\draw[draw=red] ( 0, 1) -- (6, 1);
%\draw[draw=blue, ultra thick] ( 0, -1) -- (6, -1);
%\draw[draw=blue, ultra thick] ( 7, -2) -- (7, 2);
%\node at (0,-2) {$X$};
%\node at (6,-2) {$Y$};
%\end{tikzpicture}
%\hfil
%\begin{tikzpicture}[scale=0.3]
%    \draw (0,0) ellipse (2 and 3);
%    \draw (6,0) ellipse (2 and 3);
%\draw[draw=red] (-1, -2) -- (-1, 2);
%\draw[draw=red] ( 0, 1) -- (6, 1);
%\draw[draw=blue, ultra thick] ( 0, -1) -- (6, -1);
%\draw[draw=red] ( 7, -2) -- (7, 2);
%\node at (0,-2) {$C$};
%\node at (6,-2) {$D$};
%%\node at (3, -4) 
%\end{tikzpicture}
%\end{center}
While the product of graphs generated by the three constructions is a blow-up of following graph $H_8$. Let $ACX$ stand for the vertex in $A\times C\times X$, similar for other labels:
\begin{center}
\begin{tikzpicture} [scale=0.6,
    every edge quotes/.append style={font=\scriptsize\sffamily, midway, auto, inner sep=1pt,  double},
    vertex/.style={circle, draw=black, fill=white, scale=0.6}
  ]
  \foreach \j/\i in {(-2,1,-1)/ACX,(2,1,-1)/ADY,(-3.5,1,1)/ACY,(0.5,1,1)/ADX,(-2,-3,-1)/BDX,(2,-3,-1)/BCY,(-3.5,-3,1)/BCX,(0.5,-3,1)/BDY}
  \node [vertex, scale=0.7] (\i) at \j {\i};
  \draw  [color=red] (ACX) edge (ADY);
  \draw  [color=red] (ACX) edge (ACY);

  \draw [color=red] (ACX) edge (BCY);
  \draw [color=red] (ADY) edge (ADX);
  \draw [color=red] (ADY) edge (BDX);
  \draw [color=red] (ACY) edge (BCX);
  \draw [color=red] (ADX) edge (BDY);
   \draw [color=red] (ACY) edge (ADX);
  
  \draw  [color=blue, ultra thick] (ACX) edge  (BDX);
  \draw  [color=blue, ultra thick] (ADY) edge  (BCY);
  \draw  [color=blue,  ultra thick] (ACY) edge  (BDY);
  \draw  [color=blue,  ultra thick] (ADX) edge  (BCX);
   \draw  [color=blue, ultra thick] (BDX) edge  (BCY);
  \draw  [color=blue,  ultra thick] (BDX) edge  (BCX);
   \draw  [color=blue,  ultra thick] (BDY) edge  (BCX);
  \draw  [color=blue, ultra thick] (BDY) edge  (BCY);

  \draw [
    postaction={
        decoration={contour lineto},
        draw=red},
    postaction={
        decoration={contour lineto, contour distance=1.5pt}, 
        draw=blue, decorate,  ultra thick}
   ] 
   (ACY) -- (BDX);
   \draw [
    postaction={
        decoration={contour lineto},
        draw=blue, ultra thick},
    postaction={
        decoration={contour lineto, contour distance=1.5pt}, 
        draw=red, decorate}
   ] 
   (ACX) -- (BDY);

\draw [
    postaction={
        decoration={contour lineto},
        draw=red},
    postaction={
        decoration={contour lineto, contour distance=1.5pt}, 
        draw=blue, decorate,  ultra thick}
   ] 
   (ADX) -- (BCY);
   \draw [
    postaction={
        decoration={contour lineto},
        draw=blue, ultra thick},
    postaction={
        decoration={contour lineto, contour distance=1.5pt}, 
        draw=red, decorate}
   ] 
   (ADY) -- (BCX);
 \node at (-1.5, -4, 1) {$H_8$};  
 \end{tikzpicture}
%\\ $H_8$%: {\it each label is a blow-up of vertex which is in the corresponding vertices set}.
\end{center}

To compute the Tur\'an density of $H_8$, we need the following $2$-colored graph 
$T_2=([4], \{12,14, 23, 24, 34\}, \{12, 13, 14, 23, 34\})$.   
$T_2$ is not contained in a variation of $G_C$, thus  $\pi(T_2)\geq \frac{3}{2}$.
\begin{center}
 \begin{tikzpicture} [scale=0.6,
    every edge quotes/.append style={font=\scriptsize\sffamily, midway, auto, inner sep=1pt,  double},
    vertex/.style={circle, draw=black, fill=white, scale=0.6}
  ]
  \foreach \j/\i in {(-1, 2)/1,(1, 2)/3,(1,0)/2,(-1, 0)/4}
  \node [vertex] (\i) at \j {\i};
    
\draw[draw=red] (4) -- (2);
\draw[draw=blue, ultra thick] (1) -- (3);

\draw [
    postaction={
        decoration={contour lineto},
        draw=blue, ultra thick},
    postaction={
        decoration={contour lineto, contour distance=1.5pt}, 
        draw=red, decorate}
   ] 
   (1) -- (2);
   \draw [
    postaction={
        decoration={contour lineto},
        draw=blue, ultra thick},
    postaction={
        decoration={contour lineto, contour distance=1.5pt}, 
        draw=red, decorate}
   ] 
   (3) -- (4);
   \draw [
    postaction={
        decoration={contour lineto},
        draw=blue, ultra thick},
    postaction={
        decoration={contour lineto, contour distance=1.5pt}, 
        draw=red, decorate}
   ] 
   (1) -- (4);
   \draw [
    postaction={
        decoration={contour lineto},
        draw=blue, ultra thick},
    postaction={
        decoration={contour lineto, contour distance=1.5pt}, 
        draw=red, decorate}
   ] 
   (3) -- (2);

\node at (0,-1) {$T_2$};
  \end{tikzpicture}
\end{center}

\begin{lemma}\label{T1T2free}
%If a 2-colored graph $G$ of $n$ vertices neither contains $T_1$ nor $T_2$, then $h_n(G)\leq \frac{4}{3}$. 
For any positive integer $n$, let $G$ be a  $\{T_1, T_2\}$-free $2$-colored graph on $n$ vertices. Then 
$|E(G)|\leq  {n\choose 2} +\left\lfloor \frac{n^2}{6}\right\rfloor. $% \frac{4}{3}{n\choose 2} +2n$. 
Thus $\pi(\{T_1, T_2\})\leq \frac{4}{3}$.
\end{lemma}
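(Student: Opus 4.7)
My plan is to proceed by induction on $n$, closely mirroring the proof of Lemma~\ref{Pi(T_1)}. The base case (small $n$) is verified by direct enumeration. For the inductive step, I split into two cases depending on whether $G$ contains the double triangle $K_3$ from Lemma~\ref{turanofdoubletriangle}.

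\textbf{Case 1 ($G$ contains a $K_3$ on some $\{a,b,c\}$).} The key sub-claim is that every vertex $d\in V(G)\setminus\{a,b,c\}$ contributes at most $4$ colored edges to $\{a,b,c\}$ (counting a double edge twice). This is a finite check on the 4-vertex configuration $\{a,b,c,d\}$: $w(d)=6$ produces a double-$K_4$ which plainly contains $T_1$; and $w(d)=5$ (so WLOG $da,db$ are double and $dc$ is single-colored) still forces $T_1$ via the matching decomposition $K_4=\{ac,bd\}\sqcup\{ab,cd\}\sqcup\{ad,bc\}$, since $\{ac,bd\}$ sits in $D:=E_r\cap E_b$ while the other two matchings sit in $E_r$ and $E_b$ respectively (the color of $dc$ only swaps the roles of the latter two). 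Consequently
\[
|E(G)|\le 6+4(n-3)+|E(G[V\setminus\{a,b,c\}])|,
\]
and the induction hypothesis on the $(n-3)$-vertex remainder closes the case.

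\textbf{Case 2 ($G$ is $K_3$-free, so $D$ is triangle-free).} Mantel's theorem gives $|D|\le \lfloor n^2/4\rfloor$, but this alone only yields $\pi\le 3/2$. To sharpen to $4/3$, I would use the following Zarankiewicz-type constraint forced by $\{T_1,T_2\}$-freeness: for every $uw\in U:=E_r\cup E_b$, the common double-neighborhood $N_D(u)\cap N_D(w)$ is independent in $U$. Indeed, if $v_1,v_2\in N_D(u)\cap N_D(w)$ and $v_1v_2\in U$, then $uv_1wv_2$ is a $4$-cycle in $D$ with both diagonals in $U$, and a short case analysis on the colors of $uw$ and $v_1v_2$ produces either $T_1$ (when the diagonals share a color class, or one is double) or $T_2$ (when one diagonal is red-only and the other blue-only). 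Combined with the triangle-freeness of $D$, this constraint forces $D$ to be essentially bipartite with parts $X,Y$, and forces one of those parts to be almost independent in $U$, matching the extremal structure of the construction $G_A$.

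\textbf{Main obstacle.} Case 2 is the crux. Translating the local Zarankiewicz-type constraint into the sharp asymptotic bound $|E(G)|\le (4/3+o(1))\binom{n}{2}$ requires a cherry count: applying Cauchy-Schwarz to $\sum_v \binom{d_D(v)}{2}$ (the number of length-$2$ paths in $D$) together with the independence constraint on each set $N_D(u)\cap N_D(w)$ yields a quadratic trade-off between $|D|$ and the number of non-edges of $U$, and optimizing over the sizes of the approximate bipartition of $D$ gives the maximum at $|X|=2n/3$, $|Y|=n/3$, recovering exactly the $G_A$ bound.
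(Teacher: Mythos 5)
Your Case 1 is sound and coincides with the paper's Case 1: the paper also removes a fully double triangle and invokes the $5$-edge check from Lemma \ref{Pi(T_1)} to get at most $4$ edges from each outside vertex; your matching-decomposition argument for $w(d)=5$ is a correct way to see the same thing. The problem is everything after that. Your Case 2 ("$G$ is $K_3$-free") is not a proof but a plan, and it has two concrete defects. First, the structural claim it hinges on --- that triangle-freeness of $D$ together with the local constraint "$N_D(u)\cap N_D(w)$ is independent in $U$ for every $uw\in U$" forces $D$ to be essentially bipartite with one side almost independent in $U$ --- is not justified; triangle-free graphs are not in general close to bipartite, and extracting such structure would require a stability-type argument that you neither state precisely nor prove. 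The local constraint itself is correct (a double $4$-cycle with both diagonals in $U$ does yield $T_1$ or $T_2$), but the bridge from this observation to the quantitative trade-off you need (roughly $|D|\le n^2/6+\bigl(\binom{n}{2}-|U|\bigr)$) is exactly the content of the lemma and is left unexecuted. Second, even if carried out, your Case 2 would only deliver an asymptotic bound $(4/3+o(1))\binom{n}{2}$, whereas the lemma asserts the exact inequality $|E(G)|\le\binom{n}{2}+\lfloor n^2/6\rfloor$ for every $n$; worse, your Case 1 applies the inductive hypothesis (the exact bound) to $G[V\setminus\{a,b,c\}]$, so an asymptotic-only Case 2 breaks the induction you rely on.

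The paper avoids all of this by keeping the argument purely local: besides the double triangle, it separately treats a triangle spanning $5$ colored edges, two incident double edges whose third pair is a non-edge, and finally the residual case in which every double edge $ab$ receives at most $2$ edges from any other vertex. In the first three situations one deletes the three vertices and gains at most $6+4(n-3)$ edges; in the last one deletes the two endpoints of a double edge and gains at most $2+2(n-2)$ edges; in both cases the exact inductive bound closes. To repair your proposal you would need either to carry out and make exact the global counting you sketch, or to refine your $K_3$-free case into these subcases so that the same vertex-deletion induction applies; as written, the "main obstacle" you identify is precisely the unproven heart of the lemma.
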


\begin{proof}
%For any positive integer $n$, let $G$ be a  $\{T_1, T_2\}$-free $2$-colored graph on $n$ vertices.
%By induction on $n$ we will prove $E(G)\leq \frac{4}{3}\binom{n}{2} + 2n$. % for some finite number $c$. 
It is not hard to check the cases for $n\leq 3$. 
Let $n\geq 4$, by induction on $n$ we assume the statement holds for any $\{T_1, T_2\}$-free graph
on less than $n$ vertices.
Note if $G$ contains no double-colored edge, the result is trivial.
Thus we assume $G$ contains at least one double-colored edge.
Then $G$ is one of the following cases.
 \begin{description}
\item[Case 1:]$G$ contains a triangle consisting of three double-colored edges,
let $V_1=\{a, b, c\}$ be the vertices of this 
triangle and $V_2=V(G)\setminus V_1$.
By Lemma \ref{Pi(T_1)} ``Case 2", 
for any vertex $w\in V_2$, 
there are at most $4$ edges form $w$ to $V_1$. 

\item[Case 2:] $G$ contains $V_1=\{a, b, c\}$ such that
$|E(G[V_1])|=5$. 
Let $V_2=V(G)\setminus V_1$. 
For any vertex $w\in V_2$,  there are at most $4$ edges to $V_1$. 
One can check the following graphs with $5$ edges from $w$ to $V_1$
contain $T_1$, $T_2$ and $T_1$ respectively. 
\begin{center}
  \begin{tikzpicture} [
    every edge quotes/.append style={font=\scriptsize\sffamily, midway, auto, inner sep=1pt,  double},
    vertex/.style={circle, draw=black, fill=white, scale=0.6}
  ]
  \foreach \j/\i in {(0, 0)/a,(0, 1)/b,(1, 1)/c,(1, 0)/w}
  \node [vertex] (\i) at \j {\i};
  \draw  [color=blue,  ultra thick] (a) edge (c);
  \draw  [color=blue,  ultra thick] (b) edge (w);
 \draw [
    postaction={
        decoration={contour lineto},
        draw=blue, ultra thick},
    postaction={
        decoration={contour lineto, contour distance=1.5pt}, 
        draw=red, decorate}
   ] 
   (a) -- (b);
   \draw [
    postaction={
        decoration={contour lineto},
        draw=blue, ultra thick},
    postaction={
        decoration={contour lineto, contour distance=1.5pt}, 
        draw=red, decorate}
   ] 
   (a) -- (w);
   \draw [
    postaction={
        decoration={contour lineto},
        draw=blue, ultra thick},
    postaction={
        decoration={contour lineto, contour distance=1.5pt}, 
        draw=red, decorate}
   ] 
   (c) -- (b);
   \draw [
    postaction={
        decoration={contour lineto},
        draw=blue, ultra thick},
    postaction={
        decoration={contour lineto, contour distance=1.5pt}, 
        draw=red, decorate}
   ] 
   (c) -- (w);
  \end{tikzpicture}
  \hfil
  \begin{tikzpicture} [
    every edge quotes/.append style={font=\scriptsize\sffamily, midway, auto, inner sep=1pt,  double},
    vertex/.style={circle, draw=black, fill=white, scale=0.6}
  ]
  \foreach \j/\i in {(0, 0)/a,(0, 1)/b,(1, 1)/c,(1, 0)/w}
  \node [vertex] (\i) at \j {\i};
  \draw  [color=blue,  ultra thick] (a) edge (c);
  \draw  [color=red] (b) edge (w);
 \draw [
    postaction={
        decoration={contour lineto},
        draw=blue, ultra thick},
    postaction={
        decoration={contour lineto, contour distance=1.5pt}, 
        draw=red, decorate}
   ] 
   (a) -- (b);
   \draw [
    postaction={
        decoration={contour lineto},
        draw=blue, ultra thick},
    postaction={
        decoration={contour lineto, contour distance=1.5pt}, 
        draw=red, decorate}
   ] 
   (a) -- (w);
   \draw [
    postaction={
        decoration={contour lineto},
        draw=blue, ultra thick},
    postaction={
        decoration={contour lineto, contour distance=1.5pt}, 
        draw=red, decorate}
   ] 
   (c) -- (b);
   \draw [
    postaction={
        decoration={contour lineto},
        draw=blue, ultra thick},
    postaction={
        decoration={contour lineto, contour distance=1.5pt}, 
        draw=red, decorate}
   ] 
   (c) -- (w);
  \end{tikzpicture}
  \hfil
\begin{tikzpicture} [
    every edge quotes/.append style={font=\scriptsize\sffamily, midway, auto, inner sep=1pt,  double},
    vertex/.style={circle, draw=black, fill=white, scale=0.6}
  ]
  \foreach \j/\i in {(0, 0)/a,(0, 1)/b,(1, 1)/c,(1, 0)/w}
  \node [vertex] (\i) at \j {\i};
 \draw  [color=red] (a) edge (c);
  \draw  [color=red] (b) edge (w);
 \draw [
    postaction={
        decoration={contour lineto},
        draw=blue, ultra thick},
    postaction={
        decoration={contour lineto, contour distance=1.5pt}, 
        draw=red, decorate}
   ] 
   (a) -- (b);
   \draw [
    postaction={
        decoration={contour lineto},
        draw=blue, ultra thick},
    postaction={
        decoration={contour lineto, contour distance=1.5pt}, 
        draw=red, decorate}
   ] 
   (a) -- (w);
   \draw [
    postaction={
        decoration={contour lineto},
        draw=blue, ultra thick},
    postaction={
        decoration={contour lineto, contour distance=1.5pt}, 
        draw=red, decorate}
   ] 
   (c) -- (b);
   \draw [
    postaction={
        decoration={contour lineto},
        draw=blue, ultra thick},
    postaction={
        decoration={contour lineto, contour distance=1.5pt}, 
        draw=red, decorate}
   ] 
   (c) -- (w);
  \end{tikzpicture}
\end{center}
\item[Case 3:] $G$ contains 
two incident double-colored edges $ab$ and $bc$, but no edge connecting $a$ and $c$. 
Let $V_1=\{a, b, c\}$, $V_2=V(G)\setminus V_1$.
Then there cannot be $5$ edges from any vertex $w\in V_2$ to $V_1$, 
otherwise,  $G$ is a graph either in Case 1 or in Case 2.  
Thus there are at most $4$ edges from any 
vertex in $V_2$ to $V_1$. 
\item[Case 4:] If $G$ is not the above three cases, then for any double-colored edge connecting $a$ and $b$, 
there are at most $2$ edges from any other vertex to $\{a, b\}$. 
\end{description}

Applying the inductive hypothesis to $G[V_2]$,
we have $$|E(G[V_2])|\leq \binom{|V_2|}{2}+ \left\lfloor \frac{|V_2|^2}{6}\right\rfloor.$$ 

Then the number of edges in $G$ is:
for the first three cases, 
\begin{align*} \label{eq:H_8a}
\begin{split}
|E(G)|
&= |E(G[V_1])| + |E(G[V_2])| +|E(V_1, V_2)|\\ 
&\leq 6 + \binom{n-3}{2}+ \left\lfloor \frac{(n-3)^2}{6}\right\rfloor + 4(n-3)\\
%&<\frac{4}{3}\binom{|V_2|}{2}+ 6|V_2|+ 10 \\
&=\binom{n}{2}+ 3+ \left\lfloor \frac{(n-3)^2}{6}\right\rfloor \\
&=\binom{n}{2}+\left\lfloor \frac{n^2-6n+27}{6}\right\rfloor\\
&\leq \binom{n}{2}+\left\lfloor \frac{n^2+3}{6}\right\rfloor\\
&= \binom{n}{2}+\left\lfloor \frac{n^2}{6}\right\rfloor, 
\end{split}
\end{align*}
%While
%\begin{align}\label{eq:H_8b}
%\frac{4}{3} \binom{n}{2} +cn
%& =\frac{4}{3} {\binom{|V_1|+|V_2|}{2}}+cn \\ \nonumber
%& =\frac{4}{3}\times 3 + 3c + \frac{4}{3}\binom{|V_2|}{2}+ c|V_2|+ \frac{4}{3}\times 3|V_2|\\\nonumber
%& =4 + 3c+ \frac{4}{3}\binom{|V_2|}{2}+ c|V_2|+ 4|V_2|.
%\end{align}
%Choose the value of $c$ so that $6\leq 4+3c$, then  $(\ref{eq:H_8a})$ is always less than  $(\ref{eq:H_8b})$.

for Case 4, 
\begin{align*}
|E(G)| &=|E(G[V_1])| + |E(G[V_2])| +|E(V_1, V_2)|\\
&\leq 2 + \binom{n-2}{2}+ \left\lfloor \frac{(n-2)^2}{6}\right\rfloor+ 2(n-3) \\
&= \binom{n}{2} +1+\left\lfloor \frac{(n-2)^2}{6}\right\rfloor \\
&= \binom{n}{2} +\left\lfloor \frac{n^2-4n+10}{6}\right\rfloor \\
&<\binom{n}{2} +\left\lfloor \frac{n^2}{6}\right\rfloor.
\end{align*}
The induction step is finished. 
It follows that $\pi(\{T_1, T_2\})\leq \frac{4}{3}$.  %$h_n(G)\leq \frac{4}{3}$. 
\end{proof}

\begin{lemma}\label{H8}
$\pi(H_8)= \frac{4}{3}$.
\end{lemma}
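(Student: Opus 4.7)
The plan is to prove $\pi(H_8)=\tfrac{4}{3}$ by establishing matching inequalities.

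\textbf{Lower bound.} I would verify that the construction $G_A$, which has $h_n(G_A)\to\tfrac{4}{3}$, contains no copy of $H_8$. An embedding $H_8\hookrightarrow G_A$ would induce a partition $V(H_8)=X_H\sqcup Y_H$ in which $Y_H$ is independent in $H_8$ (since the $Y$-side of $G_A$ is edgeless) and $X_H$ contains no blue edge of $H_8$ (since all blue edges of $G_A$ cross the bipartition). A direct calculation shows that the complement of the underlying simple graph of $H_8$ is the single $8$-cycle $1\text{-}4\text{-}5\text{-}8\text{-}2\text{-}3\text{-}6\text{-}7\text{-}1$, hence $\alpha(H_8)=2$. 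Meanwhile, the blue subgraph of $H_8$ is bipartite with parts $\{2,4,5,8\}$ and $\{1,3,6,7\}$ and admits the perfect matching $\{15,27,38,46\}$, so by K\"onig's theorem its independence number equals $4$. Thus $|X_H|+|Y_H|\leq 4+2<8$, a contradiction, and $\pi(H_8)\geq \tfrac{4}{3}$.

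\textbf{Upper bound.} I would show $H_8$ is simultaneously $T_1$-colorable and $T_2$-colorable. Then Lemma~\ref{homomorphism} applied with $\mathcal{H}=\{T_1,T_2\}$, combined with Lemma~\ref{T1T2free}, yields $\pi(H_8)\leq\pi(\{T_1,T_2\})\leq\tfrac{4}{3}$.

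For $T_1$-colorability I would use the clean alternative description of $T_1$ obtained by identifying $V(T_1)$ with $\{0,1\}^2$ via $1\mapsto(0,0)$, $2\mapsto(1,1)$, $3\mapsto(0,1)$, $4\mapsto(1,0)$: under this identification a pair is a red edge of $T_1$ iff its second coordinates differ and a blue edge iff its first coordinates differ. Because $H_8$ is a $2$-colored bipartite graph, its red subgraph admits the bipartition $\{1,4,5,7\}\mid\{2,3,6,8\}$ and its blue subgraph admits the bipartition $\{2,4,5,8\}\mid\{1,3,6,7\}$, each giving a natural $\{0,1\}$-labeling of $V(H_8)$; packaging these two labelings into a coordinate pair yields the required homomorphism $V(H_8)\to\{0,1\}^2=V(T_1)$.

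For $T_2$-colorability I would exhibit the explicit map sending $\{5,8\}\mapsto 1$, $\{1,4\}\mapsto 2$, $\{6,7\}\mapsto 3$, $\{2,3\}\mapsto 4$: all four preimages are independent in $H_8$ (readable off the $8$-cycle complement above), no red edge of $H_8$ runs between $\{5,8\}$ and $\{6,7\}$ (the unique non-red pair of $T_2$ being $\{1,3\}$), and no blue edge runs between $\{1,4\}$ and $\{2,3\}$ (avoiding the unique non-blue pair $\{2,4\}$); verifying these conditions reduces to a finite check on the twenty-four edges. I expect this $T_2$-colorability to be the main obstacle, because unlike the symmetric red/blue structure of $T_1$ which lets the coloring be spliced directly from the red and blue bipartitions of $H_8$, the asymmetric pattern of $T_2$ forces the partition to be discovered by targeted case analysis and verified edge by edge.
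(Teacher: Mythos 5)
Your proof is correct, and it splits into one half that matches the paper and one half that does not. The upper bound is exactly the paper's route: exhibit homomorphisms from $H_8$ to $T_1$ and to $T_2$ and then combine Lemma~\ref{homomorphism} with Lemma~\ref{T1T2free}; your explicit maps check out against all of the colored edges of $H_8$ (your $T_2$-map differs from the paper's only by a relabeling of $T_2$, and your derivation of the $T_1$-coloring by pairing the red bipartition $\{1,4,5,7\}\mid\{2,3,6,8\}$ with the blue bipartition $\{2,4,5,8\}\mid\{1,3,6,7\}$ is a nicer conceptual packaging of the same map), while the paper phrases the resulting inequality $\pi(H_8)\le\pi(\{T_1,T_2\})$ slightly more verbosely through blow-ups. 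Where you genuinely diverge is the lower bound. The paper never returns to a construction: it invokes Remark~\ref{4/3then1} (any $2$-colored graph with $\pi<\frac{4}{3}$ is degenerate, hence $T$-colorable by Item 1 of Theorem~\ref{allresultsfor2colored}) together with the assertion, stated without verification, that $H_8$ is not $T$-colorable, and concludes by contradiction. You instead verify directly that $G_A$ is $H_8$-free: any embedding would split $V(H_8)$ into a part with no blue edge, of size at most $4$ because the blue subgraph is bipartite with the perfect matching $\{15,27,38,46\}$ (K\"onig), and a fully independent part, of size at most $2$ because the complement of the underlying graph of $H_8$ is the $8$-cycle $1\,4\,5\,8\,2\,3\,6\,7$; since $4+2<8$ there is no embedding, so $\pi(H_8)\ge\frac{4}{3}$. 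I checked both structural claims and they are correct. Your route buys a self-contained, explicitly certified lower bound and avoids relying on the paper's unproved non-$T$-colorability claim (which your argument in fact implies, since $T$-colorability would force $\pi(H_8)=1$ by Lemma~\ref{Tisdege}); the paper's route is shorter given the machinery it has already set up. One stylistic remark: the $T_2$-colorability you flag as the main obstacle is in the end a finite check no harder than the $T_1$ one.
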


\begin{proof}
We first prove $\pi(H_8)\leq \frac{4}{3}$. To show this, 
we prove that $H_8$ is $T_1$ and $T_2$-colorable,  
i.e there are graph homomorphisms from 
$H_8$ to $T_1$ and from 
$H_8$  to $T_2$. 
% $T_i(2)$ is the blow-up of $T_i$ with $s=2$, for $i=3, 4$. 

\begin{description}
\item[For $T_1$:] We define a map $f$ by $f(ACX)=f(BCX)=4, f(ADY)=f(BDY)=3$, 
$f(ACY)=f(BCY)=2, f(ADX)=f(BDX)=1$.
One can check that $f$ is a graph homomorphism from  $H_8$ to $T_1$.  
\item[For $T_2$:]  We define a map $g$ by $g(ACX)= g(ADX)=1, g(ADY)=g(ACY)=3, 
g(BDX)= g(BDY)=2, g(BCY)=g(BCX)=4$. It is easy to check that $g$ is a graph  homomorphism 
from  $H_8$ to $T_2$. 
\end{description}
For any positive integer $n$, let $G_n$ be a $2$-colored graph on $n$ vertices such that 
$h_n(G_n)\geq \pi(T_1, T_2)+\epsilon=\pi(T_1(s), T_2(s))+\epsilon$, for any $s\geq 2$ and $\epsilon>0$.
Then $G_n$ contains $T_1(s)$ or $T_2(s)$ as subgraph, further $G_n$ contains $H_8$ as subgraph.
Then $\pi(H_8)\leq \pi(\{T_1, T_2\})$. By Lemma \ref{T1T2free}, $\pi(H_8)\leq \frac{4}{3}$. 
By Remark \ref{4/3then1}, if $\pi(H_8)< \frac{4}{3}$, then $\pi(H_8)=1$, while $H_8$ is not $T$-colorable, a contradiction. Thus it must be the case $\pi(H_8)=\frac{4}{3}$.  
\end{proof}

\begin{remark}\label{3/2then4/3}
As we know, if $\pi(H)< \frac{3}{2}$, it must be colorable by $G_c$ and its variations, then it must be be colorable by  $H_8$ according to Lemma \ref{productcolorable}. Thus $\pi(H)\in \{1, \frac{4}{3}\}$.
\end{remark} 

For convenience, we use numbers to represent vertices: $ACX=1, ADY=2, ACY=3, ADX=4, BDX=5, BCY=6, BCX=7, BDY=8$. 
Then $H_8$ has edges:
 $$E_r(H_8)=\{12, 13 ,24,  34,  16, 37, 48 , 25, 35, 18, 46, 27\};$$
 $$E_b(H_8)=\{56, 57, 68, 78, 26, 15, 47, 38, 35, 18, 46, 27\}.$$  
 
Now we are ready to finish the proof of Theorem \ref{allresultsfor2colored}. 
\begin{proof}[Proof of Items 2 and 3 in Theorem \ref{allresultsfor2colored}.]
By Remark \ref{4/3then1}, Remark \ref{3/2then4/3} and Lemma \ref{3/2}, the Tur\'an densities of all bipartite $2$-colored graphs are in the set $\left\{1, \frac{4}{3}, \frac{3}{2}\right\}$. 
To show Item 2, let $H$ be a $2$-colored graph with $\pi(H)= \frac{4}{3}$, then $H$ must be $H_8$-colorable. One can check if $H$ does not contain $T$ as a sub-graph, then $H$ must be $T$-colorable, implying  $\pi(H)= 1$, a contradiction.
By excluding the bipartite $2$-colored graphs in Item 2, we obtain the result in Item 3. 
\end{proof}

\begin{eg}
  Let $T_3$ be the following 2-colored graph, $T_3$ is non-degenerate and $\pi(T_3)=\frac{4}{3}$.
\end{eg}
\begin{center}
\begin{tikzpicture} [scale=0.6,
    every edge quotes/.append style={font=\scriptsize\sffamily, midway, auto, inner sep=1pt,  double},
    vertex/.style={circle, draw=black, fill=white, scale=0.6}
  ]
  \foreach \j/\i in {(0, 2)/1,(2, 2)/3,(2,0)/2,(0, 0)/4}
  \node [vertex] (\i) at \j {\i};
  \draw  [color=red] (1) edge (4);
  \draw  [color=red] (2) edge (3);
  %\draw  [color=blue,  thick] (1) edge  (3);
  \draw  [color=blue,  ultra thick] (1) edge  (3);
  
\draw [
    postaction={
        decoration={contour lineto},
        draw=blue, ultra thick},
    postaction={
        decoration={contour lineto, contour distance=1.5pt}, 
        draw=red, decorate}
   ] 
   (1) -- (2);
   \draw [
    postaction={
        decoration={contour lineto},
        draw=blue, ultra thick},
    postaction={
        decoration={contour lineto, contour distance=1.5pt}, 
        draw=red, decorate}
   ] 
   (3) -- (4);
  \end{tikzpicture}
%  \hfil
%  \begin{tikzpicture} [scale=0.6,
%    every edge quotes/.append style={font=\scriptsize\sffamily, midway, auto, inner sep=1pt,  double},
%    vertex/.style={circle, draw=black, fill=white, scale=0.6}
%  ]
%  \foreach \j/\i in {(0, 2)/1,(2, 2)/3,(2,0)/2,(0, 0)/4}
%  \node [vertex] (\i) at \j {\i};
%  \draw  [color=blue] (1) edge (4);
%  \draw  [color=blue] (2) edge (3);
%\draw  [color=blue,  thick] (1) edge  (3);
% % \draw  [color=blue,  ultra thick] (1) edge  (3);
%  
%\draw [
%    postaction={
%        decoration={contour lineto},
%        draw=blue, ultra thick},
%    postaction={
%        decoration={contour lineto, contour distance=1.5pt}, 
%        draw=red, decorate}
%   ] 
%   (1) -- (2);
%   \draw [
%    postaction={
%        decoration={contour lineto},
%        draw=blue, ultra thick},
%    postaction={
%        decoration={contour lineto, contour distance=1.5pt}, 
%        draw=red, decorate}
%   ] 
%   (3) -- (4);
%  \end{tikzpicture}
\\ $T_3$ 
\end{center}

\section{The degenerate \{2, 3\}-graphs}\label{23}

%A  non-uniform hypergraph $H=(V, E)$ makes no assumption on the size
%of edges. The set of all the cardinalities of edges in $H$ is
%denoted by $R(H)$, the set of edge types. 
%An $R$-hypergraph $H$ on $n$ vertices is denoted as $H_n^R$. 
%We denote $H^k$ as the level hypergraph of $H$ consisting of all edges of cardinality $k$ of $H$, 
%and $K^R_n$ as the complete hypergraph on $n$ vertices with edge set 
%$\cup_{i\in R}\binom{[n]}{i}$. 
%
%Given a family of hypergraphs $\mathcal H$ with common set of edge types $R$,
%The Tur\'an density of  $\mathcal H$ is defined to be:
%\begin{align*}
%\pi(\mathcal H)& =\lim_{n\to \infty}\pi_n(\mathcal H)=
%\lim_{n \to \infty} {\max}\left\{h_n(G)\right\}, 
%\end{align*}
%where the maximum is taken over all $H$-free hypergraphs $G$ on $n$ vertices satisfying
%$G\subseteq K^R_n$, and $G$ is $\mathcal{H}$-free, and the {\it Lubell function} $h_n(G)$,   
% is essentially a measure of the edge density of a non-uniform hypergraph.
%More precisely, the $h_n(G)$ is defined
%\begin{align*}
%h_n(G):= \sum_{e\in E(G)}\frac{1}{\binom{n}{|e|}}=\sum_{r\in R(G)}\frac{|E(G^r)|}{\binom{n}{r}}.
%\end{align*}
%
%Similarly, the {\it degenerate} $R$-graphs have the smallest possible Tur\'an density $|R|-1$. We say a degenerate $R$-graph trivial if it is a sub-graph of a blow-up of the chain $C^R$, otherwise, it is non-trivial. What do degenerate $R$-graphs look like? 

In this section, we study the degenerate $\{2, 3\}$-graphs and show an application of the study of $2$-edge-colored graphs on Tur\'an density of $\{2, 3\}$-graphs. A $\{2, 3\}$-graph is a non-uniform hypergraph where each edge consists of $2$ or $3$ vertices. Given a $\{2, 3\}$-graph $G$, we call an edge of cardinality $i$ as an $i$-edge,  
and use $E_i(G)$ to represent the set of $i$-edges. Thus $G$ can be represented by $G=(V(G), E_2(G), E_3(G))$. %, for each $i\in \{2, 3\}$;
A $2$-edge $e$ is called a double edge if  $e\subset f$, for some $3$-edge $f\in E_3(G)$.
For convenience, we use the form of $ac$ to denote the edge $\{a, b\}$ and use $abc$ to denote the edge $\{a, b, c\}$. The notation $H_{n}^{\{2, 3\}}$ represents a $\{2, 3\}$-graph on $n$ 
vertices, $K^{\{2, 3\}}_n$ represents the complete hypergraph on $n$ vertices with edge set 
$\binom{[n]}{2}\cup \binom{[n]}{3}$.

Given a family of $\{2, 3\}$-graphs $\mathcal H$,
the Tur\'an density of  $\mathcal H$ is defined to be:
\begin{align*}
\pi(\mathcal H)& =\lim_{n\to \infty}\pi_n(\mathcal H)=
\lim_{n \to \infty} {\max}\left\{\frac{|E_2(G)|}{\binom{n}{2}} +  \frac{|E_3(G)|}{\binom{n}{3}}\right\}, 
\end{align*}
where the maximum is taken over all $H$-free hypergraphs $G$ on $n$ vertices satisfying
$G\subseteq K^{\{2, 3\}}_n$, and $G$ is $\mathcal{H}$-free $\{2, 3\}$-graph. 
Please refer to \cite{LuBai} for details on the Tur\'an density of non-uniform hypergraphs.

Next let us see some definitions and results for $\{2, 3\}$-graphs. 

\begin{defn}\cite{JLU} 
Let $H$ be a hypergraph containing some $2$-edges. The $2$-subdivision of
$H$ is a new hypergraph $H'$ obtained from $H$ by subdividing each $2$-edge simultaneously.
Namely, if $H$ contains $t$ $2$-edges, add $t$ new vertices $x_1, \ldots, x_t$ to $H$ and for $i=1, 2, \ldots, t$ and  replace the $2$-edge $\{u_i, v_i\}$ with $\{u_i, x_i\}$ and $\{x_i, v_i\}$. 
\end{defn}
\begin{theorem}\cite{JLU} \label{subdivision}
Let $H'$ be the $2$-subdivision of $H$. If $H$ is degenerate, then so is $H'$.
\end{theorem}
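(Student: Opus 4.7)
I would prove this by contradiction, via a deletion-based reduction from $H'$-freeness to $H$-freeness. Suppose $G$ is an $H'$-free $\{2,3\}$-graph on $n$ vertices with $h_n(G) > 1 + \epsilon$, and aim for a contradiction for large $n$. Since $|E_3(G)|/\binom{n}{3} \leq 1$, this forces $|E_2(G)| \geq \epsilon \binom{n}{2}$, so the 2-edges of $G$ already form a graph of positive density.

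Set $T = v(H')$, the number of vertices of $H'$. I would define an auxiliary $\{2,3\}$-graph $G^*$ on $V(G)$ whose 3-edges coincide with $E_3(G)$, but whose 2-edges are only those $uv \in E_2(G)$ for which $u$ and $v$ share at least $T$ common 2-neighbors in $G$; call the retained 2-edges \emph{heavy} and the deleted ones \emph{light}. I claim $G^*$ is $H$-free. Given any embedding $\phi\colon V(H) \to V(G^*)$, each 2-edge $u_iv_i$ of $H$ has its image $\phi(u_i)\phi(v_i)$ heavy, so I greedily select distinct subdividers $x_i \in N_G(\phi(u_i)) \cap N_G(\phi(v_i)) \setminus (\phi(V(H)) \cup \{x_1,\dots,x_{i-1}\})$; since the excluded set has size $< T$, such an $x_i$ always exists. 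The resulting map embeds $H'$ in $G$, contradicting $H'$-freeness. Since $H$ is degenerate, $h_n(G^*) \leq 1 + o(1)$.

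Let $B = E_2(G) \setminus E_2(G^*)$ be the set of light 2-edges, so that $h_n(G) \leq h_n(G^*) + |B|/\binom{n}{2}$. It remains to show $|B| = o(n^2)$. The plan is to use $H'$-freeness structurally: if $|B| \geq \delta n^2$ for some $\delta > 0$, the light 2-edge subgraph is dense enough to contain $K_{s,s}$ for any fixed $s$ by the K\H{o}v\'ari-S\'os-Tur\'an theorem. Because the 2-edge subgraph of $H'$ is bipartite---with one side $V(H)$ and the other side the subdividers---it embeds into $K_{s,s}$ for $s$ large compared with $v(H')$. Combined with a random-sampling or supersaturation argument that realizes the 3-edges of $H$ on the chosen image of $V(H)$ (using that $h_n(G) > 1 + \epsilon$ guarantees enough 3-edges to populate any small fixed set), this yields a copy of $H'$ in $G$, contradicting the assumption. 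Hence $|B| = o(n^2)$, and combining gives $h_n(G) \leq 1 + o(1)$, the desired contradiction.

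The main obstacle is the last step: coordinating the bipartite 2-edge skeleton of $H'$ (easy to locate in a dense 2-edge graph) with the 3-edges of $H$, which must be realized on the specific $V(H)$-image vertices. The difficulty compounds when the 3-edge density of $G$ is far from $1$ while the 2-edge density is large; I expect one will need to iterate the deletion, or first apply dependent random choice to pass to a subset $U \subseteq V(G)$ on which every pair enjoys many common 2-neighbors, before invoking the degeneracy of $H$ to locate the desired $V(H)$-image with the prescribed 3-edge structure.
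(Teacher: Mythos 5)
This statement is quoted from Johnston--Lu \cite{JLU}; the present paper gives no proof of it, so there is nothing in-paper to compare your route against, and your proposal has to stand on its own. Its first half does: the heavy/light split of $E_2(G)$ by codegree threshold $T=v(H')$, the observation that the heavy $2$-edges together with $E_3(G)$ form an $H$-free hypergraph $G^*$ (via the greedy choice of distinct subdividers, where the excluded set has size at most $v(H)+t-1<T$), and the bound $h_n(G)\leq h_n(G^*)+|B|/\binom{n}{2}\leq 1+o(1)+|B|/\binom{n}{2}$ are all correct.

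The genuine gap is the step $|B|=o(n^2)$, which is exactly where the content of the theorem lives, and your sketch for it does not work. First, this is not a reduction to anything local or easier: a graph in which every edge has codegree less than $T$ can have $\Theta(n^2)$ edges (e.g.\ $K_{n/2,n/2}$, where adjacent pairs have no common neighbors at all), so the bound cannot come from the light graph alone; it must be extracted from $H'$-freeness together with \emph{both} edge types, i.e.\ it is essentially the theorem restated. Second, the proposed mechanism fails: K\H{o}v\'ari--S\'os--Tur\'an does give a $K_{s,s}$ inside a dense light graph, and the $2$-skeleton of $H'$ does embed in $K_{s,s}$, but the assertion that $h_n(G)>1+\epsilon$ ``guarantees enough $3$-edges to populate any small fixed set'' is false --- a global $3$-edge count says nothing about which triples are edges, and the vertices of the $K_{s,s}$ you locate among the light edges may span no $3$-edges of $G$ whatsoever (the light $2$-edges and the $3$-edges can occupy essentially disjoint parts of $G$). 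Supersaturation gives many copies of $H$ only in a hypergraph whose density exceeds $\pi(H)$, and the light graph by itself carries no $3$-edge density, so there is no way, as written, to coordinate the image of $V(H)$ inside the $K_{s,s}$ with the required $3$-edges. Your closing remarks (iterating the deletion, or using dependent random choice to pass to a set where all pairs have codegree at least $T$) point in a reasonable direction --- on such a set every pair is subdividable, so the degeneracy of $H$ can be brought to bear --- but none of this is carried out, and without it the argument does not close; as it stands the proof is incomplete at its decisive step.
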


\begin{defn}\cite{JLU}
The suspension of a hypergraph $H$, denoted by $S(H)$, is the hypergraph
with $V = V (H)\cup \{v\}$  where $\{v\}$ is a new vertex not in $V(H)$, and the edge set
$E=\{e\cup \{v\}: e\in E(H)\}$.  We write $S^t(H)$ to denote the hypergraph obtained by iterating
the suspension operation $t$-times, i.e. $S^2(H) = S(S(H))$ and $S^3(H) = S(S(S(H)))$,
etc.
\end{defn}

\begin{prop}\cite{JLU}\label{suspenprop}
For any family of hypergraphs $\mathcal{H}$ we have that $\pi(S(\mathcal{H}))\leq \pi(\mathcal{H})$.
\end{prop}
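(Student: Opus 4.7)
The plan is to use the standard link-of-a-vertex averaging argument, adapted to the non-uniform setting. For a $\{2,3\}$-graph $G$ (or more generally an $R$-graph) and a vertex $v \in V(G)$, I define the \emph{link} $L_v$ to be the hypergraph on $V(G)\setminus\{v\}$ whose edges are $\{e\setminus\{v\} : v\in e,\; e\in E(G)\}$. Note that if $G$ is an $R$-graph, then $L_v$ is an $(R-1)$-graph (each edge size drops by one).

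The first key observation is that if $G$ is $S(\mathcal{H})$-free, then $L_v$ is $\mathcal{H}$-free for every $v$. Indeed, an embedding of some $H\in\mathcal{H}$ into $L_v$ immediately yields an embedding of $S(H)$ into $G$ by taking $v$ as the apex vertex of the suspension. Thus each link satisfies $h_{n-1}(L_v)\leq \pi_{n-1}(\mathcal{H})$.

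The second step is the density-averaging identity. For each edge $e \in E_{r+1}(G)$, removing one of its $r+1$ vertices contributes an $r$-edge to the corresponding link, so
\begin{equation*}
\sum_{v\in V(G)} |E_r(L_v)| \;=\; (r+1)\,|E_{r+1}(G)|.
\end{equation*}
Dividing by $\binom{n-1}{r}$ and using the elementary identity $\frac{r+1}{\binom{n-1}{r}} = \frac{n}{\binom{n}{r+1}}$, then summing over all edge sizes appearing in $\mathcal{H}$, gives
\begin{equation*}
\sum_{v\in V(G)} h_{n-1}(L_v) \;=\; n \cdot h_n(G).
\end{equation*}
By averaging, some vertex $v^\ast$ satisfies $h_{n-1}(L_{v^\ast})\geq h_n(G)$. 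Combining with the first step, $h_n(G)\leq \pi_{n-1}(\mathcal{H})$ for every $S(\mathcal{H})$-free $G$ on $n$ vertices, so $\pi_n(S(\mathcal{H}))\leq \pi_{n-1}(\mathcal{H})$, and taking $n\to\infty$ yields the result.

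The only point that requires care, and is the main (mild) obstacle, is verifying that the density-averaging identity respects the non-uniform density $h_n(G)=\sum_r |E_r(G)|/\binom{n}{r}$; the binomial identity $\frac{r+1}{\binom{n-1}{r}}=\frac{n}{\binom{n}{r+1}}$ is precisely what makes the different uniformity layers rescale compatibly so that the total contribution is exactly $n\cdot h_n(G)$ rather than some weighted combination that would leak constants.
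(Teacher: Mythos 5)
Your proof is correct and follows essentially the same route as the source: this proposition is quoted from \cite{JLU} rather than reproved, and your link-of-a-vertex averaging with the identity $\frac{r+1}{\binom{n-1}{r}}=\frac{n}{\binom{n}{r+1}}$ is exactly the standard argument, which the paper itself replays in its proof of Theorem \ref{relation22and23}. No gaps to report.
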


\begin{theorem}\cite{LuBai} \label{anyRnontrivial}
  Let $R$ be a set of distinct positive integers with $|R|\geq 2$ and $R\neq \{1, 2\}$. Then a
  non-trivial degenerate $R$-graph always exists. 
\end{theorem}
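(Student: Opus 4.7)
The plan is to construct, for each admissible $R$, an explicit non-trivial $R$-graph $H_R$ with $\pi(H_R) = |R|-1$, leveraging the two preservation operations already recorded: the $2$-subdivision (Theorem \ref{subdivision}) and the suspension (Proposition \ref{suspenprop}). I would organize the argument by cases on $\min R$ and build up from a small base construction.

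In the case $\min R \geq 2$, write $R = \{r_1 < \cdots < r_k\}$ and fix a vertex partition $V_1 \cup \cdots \cup V_{r_k}$ with each part of size about $n/r_k$. Consider the extremal construction $G_n^{*}$ obtained by including, for every $r \in R$ with $r > r_1$, all $r$-transversals of the partition and no $r_1$-edges; this has edge density tending to $|R|-1$. I would design $H_R$ so that $H_R \not\subseteq G_n^{*}$ yet $H_R$ remains degenerate; a convenient choice places one edge of each size in $R$, with the $r_1$-edges forcing the partite structure and the larger edges chosen to be transversals. Degeneracy then follows from a blowup argument in the spirit of Lemma \ref{squeeze}: any $H_R$-free $R$-graph of density above $|R|-1+\varepsilon$ would, by supersaturation applied to a carefully chosen uniform slice, contain a large $r_1$-partite sub-hypergraph that in turn carries a copy of $H_R$.

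In the case $1 \in R$ with $R \neq \{1,2\}$, the set $R' = R \setminus \{1\}$ satisfies $\min R' \geq 2$. When $|R'| \geq 2$, the previous case yields a non-trivial degenerate $R'$-graph $H_{R'}$; when $|R'| = 1$ with $R' = \{r\}$, $r \geq 3$, any non-trivial $r$-partite $r$-graph on parts of size $2$ serves the same role. In either sub-case I would form $H_R$ by appending a single $1$-edge $\{v_0\}$ to $H_{R'}$ on a fresh vertex. The extremal $H_R$-free construction then either takes no $1$-edges (with an extremal $H_{R'}$-free configuration for the $R'$-edges) or takes all $1$-edges (with no $R'$-edges among the $1$-edge vertices); either way, the edge density tends to $1 + (|R'|-1) = |R|-1$, matched from above by the inductive degeneracy of $H_{R'}$.

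The main obstacle is the upper bound on $\pi(H_R)$: lower-bound constructions come easily from partite blowups, but ruling out denser $H_R$-free configurations demands adapting the supersaturation and blowup machinery of Lemma \ref{squeeze} to the non-uniform setting, where one must balance the contributions of each edge size against one another in the density measure $h_n$. The excluded case $R=\{1,2\}$ is genuinely different because mixing singletons with pairs leaves no room for a partite-style extremal construction of density $1$ that simultaneously avoids a genuinely non-trivial $\{1,2\}$-graph; that is why the hypothesis $R \neq \{1,2\}$ is necessary in the statement and why a separate direct argument is needed to verify its exceptional status.
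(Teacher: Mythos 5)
Your outline leaves the two essential steps unproved. First, the upper bound: for your (never actually specified) candidate $H_R$ you need $\pi(H_R)\le |R|-1$, and you yourself flag this as ``the main obstacle''; the gesture toward ``supersaturation applied to a carefully chosen uniform slice'' is not an argument in the non-uniform setting, where a single uniform level can have density close to $1$ while the remaining levels absorb the excess, so partiteness of one slice does not bound $h_n$. Second, non-triviality: in \cite{LuBai} ``non-trivial'' means not a subgraph of a blow-up of the chain $C^{R}$ (equivalently, not $C^R$-colorable); arranging $H_R\not\subseteq G_n^{*}$ for one extremal-looking family is neither the definition nor a substitute for it, and you never verify the actual condition. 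Your description of $G_n^{*}$ is also off: taking only transversal $r$-edges of an $r_k$-partition gives each level density strictly below $1$, so its density does not tend to $|R|-1$; the standard density-$(|R|-1)$ constructions take \emph{complete} levels for all sizes but one. The case $1\in R$ contains a concrete error: for $R'=\{r\}$, $r\ge 3$, every $r$-partite $r$-graph is $C^{\{r\}}$-colorable (map each part to one vertex of the single edge), so there is no ``non-trivial $r$-partite $r$-graph''; appending a disjoint $1$-edge and sending the new vertex to the chain's $1$-edge vertex keeps the whole graph $C^{\{1,r\}}$-colorable, so your $H_R$ is trivial and the theorem is not established even for $R=\{1,3\}$.

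The intended route, which this paper already signals in Section~\ref{23}, uses exactly the two tools you cite in your first sentence but never deploy. The chain $C^{R}$ is degenerate, the $2$-subdivision preserves degeneracy (Theorem~\ref{subdivision}), and suspension preserves it as well, since Proposition~\ref{suspenprop} gives $\pi(S(H))\le\pi(H)$ while $\pi\ge |R|-1$ holds universally for $R$-graphs with $|R|$ edge sizes. Subdividing the $2$-edge of a chain produces, e.g., $\{14,24,123\}$ for $R=\{2,3\}$, and a direct check shows such a graph is not $C^{R}$-colorable (the two subdivision edges force two vertices of a larger edge into the same part); iterating suspensions then reaches the remaining admissible sets $R$, with non-triviality again checked against the shifted chain. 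Reworking your write-up along these lines, with an explicit $H_R$ and an explicit verification of non-$C^R$-colorability, would replace both missing steps.
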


In \cite{LuBai}, we say a degenerate $R$-graph is {\em trivial} if it is a sub-graph of a blow-up of the chain $C^{R}$. By Theorem \ref{anyRnontrivial}, there exist non-trivial degenerate $\{2, 3\}$-graphs.  The $\{2, 3\}$-graph $H=\{12, 123\}$ is a chain, thus it is degenerate. By Theorem \ref{subdivision}, the subdivision $H'=\{14, 24, 123\}$ is also degenerate, but it is non-trivial. As showed in \cite{JLU}, 
$H^0=S(K_2^{1, 2})=\lbrace 13, 12, 123\rbrace$ is not degenerate, and 
$\pi(H^0)=\frac{5}{4}$. 

So what does the degenerate $\{2, 3\}$-graph look like?  To answer this question, we may need to
%To show that an $R$-graph $H$ is not degenerate, one method is to 
construct a family of $\{2, 3\}$-graphs $G_n$ with $h_n(G_n)>(1+\epsilon)$ for some $\epsilon>0$. 
Here are three $\{2, 3\}$-graphs with edge density greater than $1$.

Note that for any $R$-graph $H$ (with possible loops), one can construct the family of
$H$-colorable $R$-graph by blowing up $H$ in certain way. 
The langrangian of $H$ is the maximum edge density of the $H$-colorable $R$-graph
that one can get this way. For more details 
of $R$-graphs with loops, blow-up, and Lagrangian, please refer to \cite{LuBai}. 
In this part, we will use an easy-understood way to calculate the edge densities. 
\begin{eg}
%\begin{enumerate}
%\item%[$G_1^{\{2, 3\}}$:]
A $\{2, 3\}$-graph $G_1^{\{2, 3\}}$ is a blowing-up of the general hypergraph $H_1$ with vertex set $\{a, b, c\}$ 
and edge set $\{aa, bb, cc, abc\}$, if there exists a partition of vertex set 
such that $V(G_1^{\{2, 3\}})=A\cup B \cup C$ and every $2$-edge meets two vertices in $A$ (or $B$, or $C$), every $3$-edge meets $A, B, C$ one vertex respectively. In other words, 
$$E(G_1^{\{2, 3\}})=\binom{A}{ 2} \cup  \binom{A}{ 1}\binom{B}{ 1} \cup  \binom{A}{ 1}\binom{C}{ 1} 
\cup  \binom{A}{1}\binom{B}{ 1}\binom{C}{1}.$$

Let $|A|=xn$ and $|B|=|C|=\frac{1-x}{2}n$ for some value $x\in (0,1)$.
We have 
\begin{align*}
h_n(G_1^{\{2, 3\}})
& =\frac{ \binom{xn}{2}+\binom{xn}{1}\binom{(1-x)n}{1} }{\binom{n}{2}}+
\frac{xn(\frac{(1-x)n}{2})^2}{{n \choose 3}} \\
&= x^2 +2x(1-x)+\frac{3}{2}x(1-x)^2+o_n(1)\\
&=\frac{7}{2}x-4x^2+\frac{3}{2}x^3+o_n(1).
\end{align*}
The above value reaches the maximum value $\frac{245}{243}+o_n(1)$ at $x=\frac{7}{9}$.
 
% or the following version:
 
%We have 
%\begin{align*}
%h_n(G_1^{\{2, 3\}})
%& =\frac{ \binom{|A|}{2}+\binom{|A|}{1}\binom{(n-|A|)}{1} }{\binom{n}{2}}+
%\frac{|A||B||C|}{{n \choose 3}} \\
%&= \lambda(H_1, \vv{x})+O(\frac{1}{n}).\\
%&=\lambda(H_A)+O(\frac{1}{n}).
%\end{align*}
%Here $\vv{x}=(\frac{|A|}{n}, \frac{|B|}{n}, \frac{|C|}{n})=(\frac{7}{9},\frac{1}{9}, \frac{1}{9}).$

\begin{center}
  \begin{tikzpicture}[scale=0.3]
    \draw (0,0) ellipse (2 and 3);
    \draw (6,2) ellipse (2 and 1.5);
    \draw (6,-2) ellipse (2 and 1.5);
\draw[draw=black, ultra thick] (-1, -1) -- (-1, 1);
\draw[draw=black, ultra thick] ( 0, 2) -- (6, 2);
\draw[draw=black, ultra thick] ( 0, -2) -- (6, -2);

\shadedraw[left color=black!50!white, right color=black!10!white, draw=black!10!white] (0,0)--(6,1)--(6,-1)--cycle;
\node at (-1,-2) {$A$};
\node at (7,-2) {$B$};
\node at (7,2) {$C$};

  \end{tikzpicture}
\\
$G_1^{\{2, 3\}}: \  h_n(G_1^{\{2, 3\}})=\frac{245}{243}$ at $|A|=\frac{7}{9}n$.
\end{center}

\end{eg}

\begin{eg}
%\item%[$G_2^{\{2, 3\}}$:]
A $\{2, 3\}$-graph $G_2^{\{2, 3\}}$ is  a blowing-up of   the general hypergraph $H_2$ with vertex set $\{x, y\}$ 
and edge set $\{xy, xxx, xxy\}$, if there exists a partition of vertex set 
such that $V(G_2^{\{2, 3\}})=X\cup Y$ and  every $2$-edge meets one vertex in $X$ and one vertex in $Y$, 
every $3$-edge either meet three vertices in $X$ or two vertices in $X$ plus one vertex in $Y$. Actually 
$G_2^{\{2, 3\}}$ is $H_2$-colorable.
In other words, 
$$E(G_2^{\{2, 3\}})=\binom{X}{ 3} \cup  \binom{X}{ 2}\binom{Y}{ 1} 
\cup  \binom{X}{1}\binom{Y}{ 1}.$$

Let $|X|=xn$ and $|Y|=(1-x)n$ for some value $x\in (0,1)$, 
we have 
\begin{align*}
h_n(G_2^{\{2, 3\}})
& =\frac{ \binom{xn}{3}+\binom{xn}{2}\binom{(1-x)n}{1} }{\binom{n}{3}}+\frac{
xn(1-x)n}{{n \choose 2}} \\
&= x^3 +3x^2(1-x)+2x(1-x)+o_n(1)\\
&=2x+x^2-2x^3+o_n(1).
\end{align*}
The above value reaches the maximum value $\frac{19+13\sqrt{13}}{54}+o_n(1)
\approx 1.21985...+o_n(1)$ at $x=\frac{1+\sqrt{13}}{6}$.

\begin{center}
  \begin{tikzpicture}[scale=0.3]
    \draw (0,0) ellipse (2 and 3);
    \draw (6,0) ellipse (2 and 3);

\draw[draw=black, ultra thick] ( 0, 2) -- (6, 2);

\shadedraw[left color=black!50!white, right color=black!10!white, draw=black!10!white] (-1.2,-1)--(0.8,-1)--(0.2,1)--cycle;
\shadedraw[left color=black!50!white, right color=black!10!white, draw=black!10!white] (1,-1)--(1,1)--(6,0)--cycle;

\node at (0,-2) {$X$};
\node at (6,-2) {$Y$};

  \end{tikzpicture}
\\
$G_2^{\{2, 3\}}:\  h_n(G_2^{\{2, 3\}})\approx 1.21985$ at $|X|=(\frac{1+\sqrt{13}}{6})n$.
\end{center}

\end{eg}
\begin{eg}

A $\{2, 3\}$-graph $G_3^{\{2, 3\}}$ is  a blowing-up of the general hypergraph $H_3$ with vertex set $\{e, f\}$ 
and edge set $\{ee, eef\}$, if there exists a partition of vertex set 
such that $V(G_2^{\{2, 3\}})=E\cup F$ and  every $2$-edge meets two vertices in $E$, 
every $3$-edge meets two vertices in $E$ plus one vertex in $\textsc{F}$. Actually 
$G_3^{\{2, 3\}}$ is $H_3$-colorable.
In other words, 
$$E(G_3^{\{2, 3\}})=\binom{E}{2} \cup  \binom{E}{2}\binom{Y}{1}.$$

Let $|E|=xn$ and $|F|=(1-x)n$ for some value $x\in (0,1)$, 
we have 
\begin{align*}
h_n(G_3^{\{2, 3\}})
& =\frac{ \binom{xn}{2}}{\binom{n}{2}}+
\frac{\binom{xn}{2}\binom{(1-x)n}{1}}{{n \choose 3}} \\
&= x^2 +3x^2(1-x)+o_n(1)\\
&=4x^2-3x^3+o_n(1).
\end{align*}
The above value reaches the maximum value $\frac{256}{243}+o_n(1)$ at $x=\frac{8}{9}$.

\begin{center}

  \begin{tikzpicture}[scale=0.3]
    \draw (0,0) ellipse (2 and 3);
    \draw (6,0) ellipse (2 and 3);
\draw[draw=black, ultra thick] (-1, -1) -- (-1, 1);
\shadedraw[left color=black!50!white, right color=black!10!white, draw=black!10!white] (0,-1)--(0,1)--(6,0)--cycle;
\node at (0,-2) {$E$};
\node at (6,-2) {$F$};
  \end{tikzpicture}
\\
$G_3^{\{2, 3\}}:\  h_n(G_3^{\{2, 3\}})=\frac{256}{243}$ at $|E|=\frac{8}{9}n$.

\end{center}

\end{eg}
%\begin{eg}
%The graph $\{\{12\}, \{13\}, \{124\}, \{134\}, \{234\}\}$ is not degenerate since it is not contained in construction I. The graph $\{\{12\}, \{13\},  \{23\}, \{124\}, \{234\}\}$ is not degenerate since it is not contained in construction II or III.
%\end{eg}

A degenerate $\{2, 3\}$-graph  must appear as sub-graphs in all above
$\{2, 3\}$-graphs $G_1^{\{2, 3\}}, G_2^{\{2, 3\}}$ and $G_3^{\{2, 3\}}$,  
thus it must appear as sub-graph in the product of these hypergraphs. 
By taking this product, we get a $12$-vertex  $\{2, 3\}$-graph which is $H_{9}^{\{2, 3\}}$-colorable. Thus we have

\begin{lemma}
The degenerate $\{2, 3\}$-graphs must be $H_{9}^{\{2, 3\}}$-colorable.  
\end{lemma}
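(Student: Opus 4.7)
The plan is to mirror the product-of-colorings strategy used earlier in the paper for $2$-colored graphs (the proof of Item~1 of Theorem~\ref{allresultsfor2colored}, where $T$-colorability was deduced from $G_A$- and $G_B$-colorability via $T\cong G_A\times G_B$). Here the three base hypergraphs are $H_1$, $H_2$, $H_3$ introduced just above, whose blow-ups give $G_1^{\{2,3\}}$, $G_2^{\{2,3\}}$, $G_3^{\{2,3\}}$, and the target is the 9-vertex hypergraph $H_9^{\{2,3\}}$.

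First I would argue that any degenerate $\{2,3\}$-graph $H$ is $H_i$-colorable for every $i\in\{1,2,3\}$. By construction $G_i^{\{2,3\}}$ is $H_i$-colorable, and the three examples verify $\lim_{n\to\infty} h_n(G_i^{\{2,3\}})>1$. Since $\pi(H)=1$, for large $n$ no $\{2,3\}$-graph with density bounded below by a fixed $\epsilon>0$ above $1$ can be $H$-free, so $G_i^{\{2,3\}}$ must contain a copy of $H$ for all sufficiently large $n$. Composing the inclusion $H\hookrightarrow G_i^{\{2,3\}}$ with the blow-up homomorphism $G_i^{\{2,3\}}\to H_i$ then yields a homomorphism $H\to H_i$.

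Next I would invoke the natural analogue of Lemma~\ref{productcolorable} for non-uniform hypergraphs with loops (as developed in \cite{LuBai}) to conclude that $H$ is colorable by the product $H_1\times H_2\times H_3$. This product has $3\cdot 2\cdot 2=12$ vertices, with its $2$-edges and $3$-edges inherited coordinate-wise from the three base hypergraphs (loops in $H_i$ generate the ``within-part'' edges of the blow-up, and must be handled accordingly in the product definition).

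The final step is to exhibit a homomorphism from this 12-vertex product to $H_9^{\{2,3\}}$. Writing the product out explicitly, one identifies three pairs of vertices whose supported $2$- and $3$-edges coincide; collapsing these pairs produces a 9-vertex $\{2,3\}$-graph that agrees with $H_9^{\{2,3\}}$. Composing this collapse with the coloring $H\to H_1\times H_2\times H_3$ finishes the proof. The main obstacle is this last identification: it requires a careful case analysis over the edges of the 12-vertex product, with particular attention to how the loop edges of $H_1$, $H_2$, $H_3$ interact under the product to yield $2$- versus $3$-edges. Everything else is a routine transfer of the $2$-colored arguments from Section~\ref{turanofedgecoloredgraph} to the non-uniform setting via \cite{LuBai}.
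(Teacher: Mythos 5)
Your proposal is correct and is essentially the paper's own argument: the paper likewise deduces $H_i$-colorability ($i=1,2,3$) of any degenerate $\{2,3\}$-graph from the fact that $G_1^{\{2,3\}},G_2^{\{2,3\}},G_3^{\{2,3\}}$ have densities exceeding $1$, passes to the product of the three constructions, and notes that the resulting $12$-vertex pattern folds onto $H_9^{\{2,3\}}$ by identifying the three $Y F$-type vertices with the corresponding $X F$-type vertices. One caution for your deferred case analysis: the collapse works when $H_1$ is taken with $2$-edges $\{aa,ab,ac\}$, i.e.\ as in the displayed formula $E(G_1^{\{2,3\}})=\binom{A}{2}\cup\binom{A}{1}\binom{B}{1}\cup\binom{A}{1}\binom{C}{1}\cup\binom{A}{1}\binom{B}{1}\binom{C}{1}$ and the density computation (not the prose edge set $\{aa,bb,cc,abc\}$, for which the product's $2$-edges would not map into $H_9^{\{2,3\}}$), and the identified pairs have nested rather than literally coinciding links, which still suffices for the folding to be a homomorphism.
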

\begin{center}
\begin{tikzpicture}[scale=0.7,vertex/.style={circle, draw=black, fill=white}]
\shadedraw[left color=black!50!white, right color=black!10!white, draw=black!10!white, opacity=0.5] (-1,2)--(-2,0)--(0,0)--cycle;
\shadedraw[left color=red!50!white, right color=red!10!white, draw=red!10!white]
 (-1,2)--(-1.5,-2)--(0,0)--cycle;
\shadedraw[left color=black!50!white, right color=black!10!white, draw=black!10!white, opacity=0.8]
 (-1,2)--(-1.5,-2)--(0,-1.6)--cycle;
\shadedraw[left color=black!50!white, right color=black!10!white, draw=black!10!white, opacity=0.5]
 (1,2)--(2,0)--(0,0)--cycle;
\shadedraw[left color=red!50!white, right color=red!10!white, draw=red!10!white] 
(1,2)--(1.5,-2)--(0,0)--cycle;
\shadedraw[left color=black!50!white, right color=black!10!white, draw=black!10!white, opacity=0.8] 
(1,2)--(1.5,-2)--(0,-1.6)--cycle;
\shadedraw[left color=black!50!white, right color=black!10!white, draw=black!10!white, opacity=0.4] 
(-2,-4)--(-2,0)--(1.5,-2)--cycle;
\shadedraw[left color=green!50!white, right color=green!10!white, draw=green!10!white, opacity=0.3]
(-2,-4)--(2,0)--(-1.5,-2)--cycle;
\shadedraw[left color=black!50!white, right color=black!10!white, draw=black!10!white, opacity=0.4]
(-2,-4)--(-1.5,-2)--(1.5,-2)--cycle;

\node at (0,0) [vertex, scale=0.5] (v1) [label=above:{AXE}] {};
\node at (-2, 0) [vertex, scale=0.5] (v2) [label=left:{CYE}] {};
\node at (2, 0) [vertex, scale=0.5] (v3) [label=right:{BYE}] {};
\node at (-1,2) [vertex, scale=0.5] (v4) [label=above:{BXF}] {};
\node at (1,2) [vertex, scale=0.5] (v5) [label=above:{CXF}] {};
\node at (-1.5,-2) [vertex, scale=0.5] (v8) [label=left:{CXE}] {};
\node at (1.5,-2) [vertex, scale=0.5] (v9) [label=right:{BXE}] {};
\node at (-2,-4) [vertex, scale=0.5] (v10) [label=below:{AXF}] {};
\node at (0,-1.6) [vertex, scale=0.5] (v12) [label=below:{AYE}] {};
\draw [draw=black, ultra thick] (v1) -- (v2);
\draw [draw=black, ultra thick] (v1) -- (v3);
\draw [draw=black, ultra thick] (v1) -- (v12);
\draw [draw=black, ultra thick] (v8) -- (v12);
\draw [draw=black, ultra thick] (v9) -- (v12);
\node at (0,-5) {$H_{9}^{\{2, 3\}}$} ;
\end{tikzpicture}
\end{center}

 The following lemma shows a relation between such $\{2, 3\}$-graphs and the $2$-colored graphs and can help us  determine the upper bound for the Tur\'an density of some  $\{2, 3\}$-graphs. 

\begin{theorem}\label{relation22and23}
Let $H=(V, \ E_r, \ E_b)$ be a $2$-colored graph, and $H'=(V', E_2, \ E_3)$ be a $\{2, 3\}$-graph obtained from $H$ by adding a new vertex $v\not\in (V)$ such that $V'=V\cup \{v\}$ and $E_2=E_r$, and $E_3=\{e' | e'=e\cup{v}, e\in E_b\}$. Then $\pi(H')\leq \pi(H)$. 
\end{theorem}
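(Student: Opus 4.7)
The plan is to take any $H'$-free $\{2,3\}$-graph $G'$ on $n$ vertices and, via a link-plus-averaging argument over a choice of vertex $v' \in V(G')$, extract an $H$-free $2$-colored graph on $n-1$ vertices whose density is at least $h_n(G')$. The key observation is that every $3$-edge of $H'$ contains the distinguished vertex $v$, so it is natural to let $v'$ play the role of $v$: the link of $v'$ in $E_3(G')$ will supply the blue edges of the $2$-colored graph, while the $2$-edges of $G'$ avoiding $v'$ will supply its red edges.

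Concretely, for each $v' \in V(G')$ I would define the $2$-colored graph
\[
G_{v'} := \bigl(V(G') \setminus \{v'\},\ E_r(G_{v'}),\ E_b(G_{v'})\bigr),
\]
where $E_r(G_{v'}) = \{e \in E_2(G') : v' \notin e\}$ and $E_b(G_{v'}) = \{ab : \{a,b,v'\} \in E_3(G')\}$. If $\phi$ were an embedding of $H$ into $G_{v'}$, then extending $\phi$ by $\phi(v) = v'$ would send each red edge of $H$ to a $2$-edge of $G'$ and each blue edge $ab \in E_b(H)$ to the $3$-edge $\{\phi(a), \phi(b), v'\} \in E_3(G')$, producing a copy of $H'$ in $G'$. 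Hence the $H'$-freeness of $G'$ forces every $G_{v'}$ to be $H$-free.

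For the density bound I would invoke a simple double-counting. Each $2$-edge $e \in E_2(G')$ belongs to $E_r(G_{v'})$ for exactly $n-2$ choices of $v'$, and each $3$-edge $f \in E_3(G')$ contributes a pair to $E_b(G_{v'})$ for exactly $3$ choices of $v'$. Therefore
\[
\sum_{v' \in V(G')} \bigl(|E_r(G_{v'})| + |E_b(G_{v'})|\bigr) = (n-2)|E_2(G')| + 3|E_3(G')|.
\]
Dividing by $n\binom{n-1}{2}$ and using the identities $\frac{n-2}{n\binom{n-1}{2}} = \frac{1}{\binom{n}{2}}$ and $\frac{3}{n\binom{n-1}{2}} = \frac{1}{\binom{n}{3}}$, the average of $h_{n-1}(G_{v'})$ over $v'$ is exactly $h_n(G')$. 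Choosing a $v'$ attaining this average yields an $H$-free $2$-colored graph on $n-1$ vertices with density at least $h_n(G')$; applying this to an extremal $G'$ gives $\pi_{n-1}(H) \geq \pi_n(H')$, and letting $n \to \infty$ finishes the proof.

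The argument poses no real obstacle — it is essentially forced once one reads $H'$ as ``attach $v$ to every blue edge of $H$ to turn it into a $3$-edge, and leave the red edges as $2$-edges''. The only step requiring a small amount of attention is verifying that the averaging identities above reproduce $h_n(G')$ exactly (rather than something off by a $1+o(1)$ factor that would still suffice asymptotically); as shown, they match on the nose, so no supersaturation or stability input is needed.
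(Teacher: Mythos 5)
Your proposal is correct and is essentially the paper's own argument: the same link construction $G_{v'}$ (red edges from $E_2$ avoiding $v'$, blue edges from the $3$-edge link of $v'$), the same $H$-freeness observation, and the same exact double-counting identities $(n-2)\binom{n}{2}=3\binom{n}{3}=n\binom{n-1}{2}$ to average $h_{n-1}(G_{v'})$ into $h_n(G')$. The only cosmetic difference is that you select a vertex attaining the average and pass to the limit via $\pi_n(H')\le\pi_{n-1}(H)$, while the paper bounds each $h_{n-1}(G_v)$ by the Tur\'an density directly; these are equivalent.
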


\begin{proof}
Let $n$ be positive integer, let $G=(V, \ E_2(G), \ E_3(G))$ be an arbitrary $H'$-free $\{2, 3\}$-graph on $n$ vertices. 
For any vertex $v\in V(G)$, let $G_v=(V(G)\setminus \{v\}, E_{v,2},  E_{v,3})$ be a $2$-colored graph obtained form $G$,  such that the red edges are $E_{v,2}=E_2(G)$, the blue edges are $E_{v,3}=\{{u, w} | \{vuw\}\in E_3\}$. 
Observe that $G_v$ is $H$-free since $G$ is $H'$-free. Thus $h_{n-1}(G_v)\leq \pi_n(H).$

Since $$|E_2(G)|= \frac{1}{n-2}\sum\limits_{v\in V(G)} |E_{v,2}| \  \text{and }\ |E_3(G)|=\frac{1}{3}\sum\limits_{v\in V(G)} |E_{v,3}|,$$
Then 
\begin{align*}
h_n(G)&=\frac{|E_2(G)|}{{n\choose 2}} + \frac{|E_3(G)|}{{n\choose 3}}\\
&=\sum\limits_{v\in V(G)} \frac{|E_{v,2}|}{(n-2){n\choose 2}} + 
\sum\limits_{v\in V(G)} \frac{|E_{v,3}|}{3{n\choose 3}}\\
&=\frac{1}{n}\sum\limits_{v\in V(G)} \frac{|E_{v,2}|}{{n-1\choose 2}} +
\frac{1}{n}\sum\limits_{v\in V(G)}  \frac{|E_{v,3}|}{{n-1\choose 2}}\\
& =\frac{1}{n}\sum\limits_{v\in V(G)} \left(  \frac{|E_{v,2}|}{{n-1\choose 2}}+ \frac{|E_{v,3}|}{{n-1\choose 2}}\right)\\
&\leq \frac{1}{n}\sum\limits_{v\in V(G)} h_{n-1}(G_v) \\
&\leq  \pi(H).
\end{align*}
Therefore $\pi(H')\leq \pi(H)$. 
\end{proof}

So far we couldn't give an upper bound of $\pi(H_{9}^{\{2, 3\}})$, but we can show a sub-graph of 
$\pi(H_{9}^{\{2, 3\}})$ are degenerate using above theorem. 
Let us observe that if we remove a single vertex $AXF$ and edges connecting to it, the resulting sub-hypergraph is $H_{5}^{\{2, 3\}}$-colorable, 
where $H_{5}^{\{2, 3\}}=([5], \{12, 13, 34, 125, 135, 345\}).$
\begin{center}
\begin{tikzpicture}[scale=0.8, vertex/.style={circle, draw=black, fill=white}]
\shadedraw[left color=black!50!white, right color=black!10!white, draw=black!10!white] (0,0)--(1,0.5)--(1,2)--cycle;
\shadedraw[left color=black!50!white, right color=black!10!white, draw=black!10!white] (2,0)--(1,0.5)--(1,2)--cycle;
\shadedraw[left color=black!50!white, right color=black!10!white, draw=black!10!white] (2,0)--(3,0.5)--(1,2)--cycle;

\node at (0,0) [vertex, scale=0.5] (v1) [label=below:{1}] {};
\node at (1,0.5) [vertex, scale=0.5] (v2) [label=right:{2}] {};
\node at (2,0) [vertex, scale=0.5] (v3) [label=below:{3}] {};
\node at (3,0.5) [vertex, scale=0.5] (v4) [label=above:{4}] {};
\node at (1,2) [vertex, scale=0.5] (v5) [label=above:{5}] {};
\draw [draw=black, ultra thick] (v1) -- (v2);
\draw [draw=black, ultra thick] (v1) -- (v3);
\draw [draw=black, ultra thick] (v3) -- (v4);
\node at (1,-1) {$H_{5}^{\{2, 3\}}$} ;
\end{tikzpicture}
\hfil
\begin{tikzpicture}[scale=0.5, vertex/.style={circle, draw=black, fill=white}]

\shadedraw[left color=black!50!white, right color=black!10!white, draw=black!10!white, opacity=0.8]
 (-1,2)--(-1.5,-2)--(0,-1.6)--cycle;
\shadedraw[left color=black!50!white, right color=black!10!white, draw=black!10!white, opacity=0.8] 
(1,2)--(1.5,-2)--(0,-1.6)--cycle;
\shadedraw[left color=black!50!white, right color=black!10!white, draw=black!10!white, opacity=0.5] 
(-2,-4)--(-1.5,-2)--(1.5,-2)--cycle;

\node at (-1,2) [vertex, scale=0.5] (v4) [label=above:{1}] {};
\node at (1,2) [vertex, scale=0.5] (v5) [label=above:{2}] {};

\node at (-1.5,-2) [vertex, scale=0.5] (v8) [label=left:{4}] {};
\node at (1.5,-2) [vertex, scale=0.5] (v9) [label=right:{5}] {};

\node at (-2,-4) [vertex, scale=0.5] (v10) [label=below:{6}] {};
\node at (0,-1.6) [vertex, scale=0.5] (v12) [label=above:{3}] {};
\draw [draw=black, ultra thick] (v8) -- (v12);
\draw [draw=black, ultra thick] (v9) -- (v12);
\node at (0,-5) {$H_{6}^{\{2, 3\}}$} ;
\end{tikzpicture}
\end{center}
%Note there exists a single vertex $5\in V(H_{5}^{\{2, 3\}})$ connecting all $3$-edges of $H_{5}^{\{2, 3\}}$.
%%
Observe that we can also obtain $H_{5}^{\{2, 3\}}$ from $T$ by adding vertex $5$, and connect it with blue edges. Thus we have $\pi(H_{5}^{\{2, 3\}})=1$.

In $H_{9}^{\{2, 3\}}$, removing a single $2$-edge connecting vertices $AXE$ and $AYE$, the resulting sub-graph is $H_{6}^{\{2, 3\}}$-colorable,
where  $H_{6}^{\{2, 3\}}=([6], \{34, 35, 134, 235, 456\}).$ 
However, we don't know the Tur\'an density of $H_{6}^{\{2, 3\}}$. 
We remark that determining the degenerate $\{2, 3\}$-hypergraph is still unknown.

%%%%%%%%%%%%%%%%%%%%%%%%%%%%%%%%%%%%%%%%%%%%%%%%%%%%%%%%%%%%%%%%%%%%%%%%%%%%%%%%%%%%%%%%%

\end{document}